\documentclass[11pt]{article}

\usepackage[margin=1.12in]{geometry}
\usepackage[T1]{fontenc}
\usepackage[utf8]{inputenc}
\usepackage{lmodern}
\usepackage{microtype}
\usepackage{amsmath,amssymb,amsthm,mathtools}
\usepackage{enumitem}
\usepackage{float}
\usepackage{xcolor}
\usepackage{tikz}
\usetikzlibrary{positioning,calc}
\usepackage{hyperref}

\hypersetup{
  colorlinks=true,
  linkcolor=blue!45!black,
  citecolor=blue!45!black,
  urlcolor=blue!45!black,
  pdftitle={Odd-Cycle Span Defect: A Polynomial Lower Bound and a Square-Root Upper Bound},
  pdfauthor={Shuyan Chen}
}

\title{Odd-Cycle Span Defect:\\A Polynomial Lower Bound and a Square-Root Upper Bound}
\author{Shuyan Chen\thanks{Department of Mathematics, The University of Manchester, Alan Turing Building, Oxford Road, Manchester M13 9PL, United Kingdom. Email: \href{mailto:shuyan.chen-2@student.manchester.ac.uk}{\texttt{shuyan.chen-2@student.manchester.ac.uk}}.}}
\date{June 2026}

\newtheorem{theorem}{Theorem}[section]
\newtheorem{lemma}[theorem]{Lemma}
\newtheorem{proposition}[theorem]{Proposition}
\newtheorem{corollary}[theorem]{Corollary}
\newtheorem{observation}[theorem]{Observation}

\theoremstyle{definition}

\theoremstyle{remark}
\newtheorem{remark}[theorem]{Remark}
\newtheorem{question}[theorem]{Question}

\newcommand{\ohdef}{\operatorname{ohdef}}
\newcommand{\pathspan}{\pi}
\newcommand{\distH}{d_{\mathrm H}}

\begin{document}

\maketitle

\begin{abstract}
For a graph $G$, let
\[
  \psi(G)=\max\{\chi(G[V(C)]):C\text{ is an odd cycle of }G\},
\]
with $\psi(G)=0$ when $G$ is bipartite.  For positive integers $N$, set
\[
  F(N)=\max\{\chi(G)-\psi(G):|V(G)|\le N\}.
\]
The function $F$ measures the finite-order additive gap arising from an open
problem of Erd\H{o}s and Hajnal.  We prove
\[
  N^{1/6-o(1)}\le F(N)<\sqrt{6N}.
\]
The lower bound raises the finite-order scale supplied by the Cameron--Clow
path-colour construction from $\log N/\log\log N$ to a fixed power of $N$.
Its proof constructs a palette--code graph from a binary covering code
$\mathcal C\subseteq\{0,1\}^p$ and establishes the exact identities
\[
  \chi(G)=2p+\ell-\rho(\mathcal C),
  \qquad
  \psi(G)=2p.
\]
Near-middle Hamming coverings yield the exponent $1/6$.  The upper bound
combines Polavarapu's connectivity theorem, the Chv\'atal--Erd\H{o}s
Hamiltonicity theorem, and maximum-independent-set stripping.
\end{abstract}

\medskip
\noindent\textbf{Keywords.} odd-cycle span defect; chromatic number;
covering code; Hamiltonian graph; highly connected subgraph.

\section{Introduction}

A problem of Erd\H{o}s and Hajnal, recorded by Gy\'arf\'as~\cite{Gyarfas1997},
asks whether for every integer $k\ge3$ there is a number $f(k)$ such that every
graph $G$ with $\chi(G)\ge f(k)$ contains an odd cycle $C$ with
\[
  \chi(G[V(C)])\ge k.
\]
The question belongs to the broader programme of $\chi$-boundedness initiated
by Gy\'arf\'as~\cite{Gyarfas1987}; see Scott and Seymour~\cite{ScottSeymour2020}
for a survey.  It remains open and is listed as Problem~640 in Bloom's
catalogue of Erd\H{o}s problems~\cite{Bloom640}.

For a graph $G$, define its \emph{odd-cycle span} by
\[
  \psi(G)=\max\{\chi(G[V(C)]):C\text{ is an odd cycle of }G\},
\]
with $\psi(G)=0$ when $G$ is bipartite.  The cycle $C$ is not required to be
induced: $G[V(C)]$ retains all chords among the vertices of $C$.  We introduce
the finite-order additive extremal function
\[
  \ohdef(G)=\chi(G)-\psi(G),
  \qquad
  F(N)=\max\{\ohdef(G):|V(G)|\le N\},
  \qquad N\in\mathbb N.
\]
Throughout, all graphs are finite and simple, all logarithms are natural, and
$\chi(\varnothing)=\alpha(\varnothing)=\psi(\varnothing)=0$.

The related path parameter
\[
  \pathspan(G)=\max\{\chi(G[V(P)]):P\text{ is a path of }G\}
\]
satisfies $\psi(G)\le\pathspan(G)$.  This inequality transfers finite-order
path constructions to $F$.

The path-colour problem was studied by Randerath and
Schiermeyer~\cite{RanderathSchiermeyer2002} and, more recently, by Cameron and
Clow~\cite{CameronClow2025}.  Inspecting the order growth in the Cameron--Clow recursion gives the
following finite-order consequence.

\begin{proposition}[Cameron--Clow transfer]\label{prop:cc-transfer}
As $N\to\infty$,
\[
  F(N)\ge
  \left(\frac12-o(1)\right)\frac{\log N}{\log\log N}.
\]
\end{proposition}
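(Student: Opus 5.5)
The plan is to take the path-colour graphs of Cameron and Clow~\cite{CameronClow2025} as a black box for the chromatic statement, recompute their orders from the recursion, and transfer via $\psi\le\pathspan$. I am recalling their construction rather than checking it, so the exact form of the guarantee and of the recursion is an assumption to be confirmed against~\cite{CameronClow2025}. The form I expect is a sequence of graphs $G_k$ with
\[
  \chi(G_k)\ge k,\qquad \pathspan(G_k)\le \left\lceil \tfrac{k}{2}\right\rceil+O(1).
\]
Any result of this shape with additive loss $k/2-O(1)$ is what produces the constant $\frac12$. Given it, the inequality $\psi(G)\le\pathspan(G)$ from the introduction yields
\[
  \ohdef(G_k)=\chi(G_k)-\psi(G_k)\ge \chi(G_k)-\pathspan(G_k)\ge \tfrac{k}{2}-O(1).
\]
So the whole task reduces to bounding $N_k=|V(G_k)|$.

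For the order, I will unwind the recursion. I expect each step to build $G_k$ from a bounded number of copies of $G_{k-1}$, of order $O(k)$ each, plus $O(k)$ linking vertices. The resulting bound would be
\[
  N_k\le C\,k\,N_{k-1}+O(k),\qquad\text{hence}\qquad N_k\le k!\,C^{k}\cdot O(1),
\]
and therefore $\log N_k\le k\log k+O(k)=(1+o(1))\,k\log k$. This is the ``inspection of the order growth'' the statement alludes to. The essential point is that the blow-up factor per level is at most polynomial in $k$, not exponential.

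Next I invert this bound. Given large $N$, let $k$ be the largest integer with $N_k\le N$. Then $N_{k+1}>N$. Since $\log N_{k+1}\le(1+o(1))(k+1)\log(k+1)$, this gives
\[
  k\ge(1-o(1))\frac{\log N}{\log\log N}.
\]
Because $F$ is nondecreasing in $N$ (the condition is $|V(G)|\le N$), we obtain
\[
  F(N)\ge\ohdef(G_k)\ge \frac{k}{2}-O(1)\ge\left(\frac12-o(1)\right)\frac{\log N}{\log\log N}.
\]

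The main obstacle is the second step: extracting a clean upper bound on $|V(G_k)|$ from the Cameron--Clow recursion. Their paper is concerned with $\chi$ versus $\pathspan$, not with order. If their step instead multiplies the order by something like $2^{k}$ or $k^{2}$, the exponent $k\log k$ changes and so does the constant. I would first try to verify a per-step factor of $O(k)$ directly, counting copies and connector vertices in their construction. If the factor turns out to be $k^{c}$, the same argument still gives $F(N)=\Omega(\log N/\log\log N)$, but with constant $\frac1{2c}$ in place of $\frac12$, and the statement's constant would need adjusting.
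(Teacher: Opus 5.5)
There is a genuine gap. Your skeleton matches the paper's: use $\psi\le\pathspan$, bound the order, then invert. The quantitative input, however, is a mis-recollection of the Cameron--Clow construction, so the step that produces the constant $\frac12$ is not established. The construction is not a one-parameter sequence with $\chi(G_k)\ge k$, $\pathspan(G_k)\le k/2+O(1)$ and $N_k\le CkN_{k-1}+O(k)$. Instead one fixes the path-span cap $r$, starts from $G_0^{(r)}=K_r$, and runs only $m=\lfloor r/2\rfloor-1$ stages. Stage $k+1$ uses $2\binom{s_k}{2}$ copies of $G_k^{(r)}$, where $s_k=r+k-1$, so
\[
  n_{k+1}=2\binom{s_k}{2}n_k+2s_k+3\le Cr^2n_k .
\]
The terminal graph satisfies $\pathspan\le r$ and $\chi\ge\lfloor 3r/2\rfloor-1$, a ratio of $3/2$ rather than $2$. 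The per-stage blow-up is therefore quadratic, not linear, in the parameter.

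Your contingency clause would then mislead you. Fed a per-stage factor of order $r^2$, your model predicts the constant $\frac14$ and says the statement would need adjusting, which is false. The idea you are missing is that each stage raises $\chi$ by exactly one while $\pathspan$ stays at the fixed cap $r$. So the number of stages equals the defect gained, about $r/2$, and
\[
  \log n_m\le\log r+m\bigl(2\log r+O(1)\bigr)=r\log r+O(r),
\]
against a defect of $\lfloor r/2\rfloor-1$. Choosing $r\approx\log N/\log\log N$ then gives the constant $\frac12$; the paper takes $r=\lfloor L_1/(L_2+3L_3)\rfloor$ so that the $O(r)$ term is absorbed.

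Your model reaches the same relation $\log N\approx 2d\log d$ only because two errors cancel. It uses twice as many stages as the real construction, each with half the logarithmic growth. To make the argument a proof, you must extract the actual recursion and chromatic/path-span guarantee as above rather than assume a form for them.
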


The leading constant in this estimate comes from the exact stage count in
the Cameron--Clow recursion; Section~\ref{sec:cc-transfer} gives the
calculation.

The palette--code construction gives the following polynomial lower bound.

\begin{theorem}[Polynomial lower bound]\label{thm:lower-main}
For every $\delta>0$ and all sufficiently large $N$,
\[
  F(N)\ge N^{1/6-\delta}.
\]
Equivalently, $F(N)\ge N^{1/6-o(1)}$.
\end{theorem}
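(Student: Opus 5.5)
The plan is to realise a large gap $\chi(G)-\psi(G)$ by a single explicit family, the palette--code graphs announced in the abstract, and then optimise the parameters. Given integers $p,\ell$ and a binary code $\mathcal C\subseteq\{0,1\}^p$ with covering radius $\rho=\rho(\mathcal C)$, I will build a graph $G=G(\mathcal C,\ell)$ in three layers.
\begin{enumerate}[label=(\roman*)]
\item A \emph{palette} part consists of $p$ disjoint pairs of colour-classes, one pair for each coordinate, so that a word $x\in\{0,1\}^p$ encodes a choice of one class from each pair.
\item A \emph{code} part consists of gadgets indexed by codewords $c\in\mathcal C$. These force every proper colouring to spend extra colours unless some codeword agrees with the palette choice in all but a bounded number of coordinates.
\item A clique-like part of size $\ell$ is attached so that disagreements cost colours one-for-one.
\end{enumerate}
The target is the pair of identities $\chi(G)=2p+\ell-\rho(\mathcal C)$ and $\psi(G)=2p$. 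Once these hold, $\ohdef(G)=\ell-\rho(\mathcal C)$, and the problem reduces to making $\ell-\rho$ large while keeping $|V(G)|$ small.

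The proof of the identities splits into four inequalities.
\begin{enumerate}[label=(\alph*)]
\item For $\chi(G)\le 2p+\ell-\rho$, I will give an explicit colouring that uses a codeword at distance at most $\rho$ from the word realised by the palette, so that $\rho$ colours of the clique part can be recycled.
\item For $\chi(G)\ge 2p+\ell-\rho$, take an arbitrary proper colouring, read off a word $x$ from the palette, and choose $x$ at distance exactly $\rho$ from $\mathcal C$; such an $x$ exists by the definition of covering radius. A counting or pigeonhole argument on the gadgets then shows that at most $\rho$ colours can be saved.
\item For $\psi(G)\ge 2p$, exhibit one odd cycle whose vertex set spans the palette.
\item For $\psi(G)\le 2p$, show that any cycle, odd or not, can meet the code and clique gadgets only in a structured way. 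Its vertex set then induces a subgraph that is $2p$-colourable, because one fixed codeword can be used throughout.
\end{enumerate}

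I expect (d) to be the main obstacle. A cycle may wander through many gadgets, and $G[V(C)]$ keeps all chords. The first thing I would try is to design the gadgets so that every gadget vertex has low degree into the rest of the graph, or is separated from it by cut-vertices or $2$-cuts. A cycle can then visit only $O(1)$ code gadgets, and inside any one gadget a colouring with $2p$ colours is available. If this fails, I would instead bound $\psi(G)$ by $\pathspan(G)$ and analyse paths, which only makes (d) harder to state but easier to localise.

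For the asymptotics, I will use near-middle Hamming coverings. Only words of weight close to $p/2$ need to be covered, because the gadgets force the palette word into a thin middle band. Take a random or greedy covering of that band by Hamming balls of radius $\rho$. Standard covering-code bounds give $|\mathcal C|$ equal to the band size divided by the ball size, up to polynomial factors. Choosing $\ell\approx2\rho$ makes the defect about $\rho$. The order is $|V(G)|=\mathrm{poly}(p,\ell)\cdot|\mathcal C|$, and the free parameters are then balanced so that $|V(G)|\le \rho^{6+o(1)}$. Inverting this relation, and interpolating over $N$ by taking disjoint unions or padding with isolated vertices, gives $F(N)\ge N^{1/6-\delta}$ for all large $N$. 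The secondary difficulty is this bookkeeping: it must be checked that the middle-band restriction actually lets the code size be polynomial rather than exponential in $\rho$. This is where the exponent $1/6$ should come from.
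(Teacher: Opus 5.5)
Your proposal has a genuine gap at its core. The gadgets are never specified, and the mechanism you suggest for step (d), $\psi(G)\le 2p$, would not work.

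Attaching each code gadget through a cut-vertex or a $2$-cut defeats its own purpose. A gadget meeting the rest of the graph in at most two vertices can only react to whether those vertices receive equal or distinct colours (permute colours inside the gadget). So it cannot measure the Hamming distance between its codeword and the palette word, and that is exactly what makes $\chi(G)$ depend on $\rho(\mathcal C)$. Low degree of gadget vertices, on the other hand, does not limit how many gadgets a cycle passes through.

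The paper does the opposite. Each $Q_x$ is a clique of order $\ell$ complete to the $2p$-set $R_x=Z\cup\{a_i^{1-x_i}:i\in[p]\}$ of the core $S_p$. An odd cycle may traverse up to $3p$ attached cliques, since each visited $Q_x$ needs two cycle edges into $S_p$ and $|S_p|=3p$. The missing idea is cycle-local recolouring. Let $J$ be the set of codewords whose cliques the cycle visits. The condition $3p\,B_p(\ell-1)<2^p$ guarantees a word $y$ with $\distH(x,y)\ge\ell$ for all $x\in J$. The canonical colouring of $S_p$ for $y$ then leaves at least $\ell$ unused palette colours on each such $R_x$, so $S_p\cup\bigcup_{x\in J}Q_x$ is $2p$-colourable. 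The point is not that a cycle sees $O(1)$ gadgets, nor that ``one fixed codeword'' works; the recolouring word depends on the cycle.

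The remaining steps are also misaligned. In (b) the word is forced by the colouring, and the covering property supplies a codeword within distance $\rho$ of it. A word at distance exactly $\rho$ from $\mathcal C$ is what builds the colouring in (a); you have these two roles reversed.

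More importantly, your parameter regime is inconsistent with $\psi(G)=2p$:
\begin{itemize}
\item The recolouring condition forces $\ell\le p/2-(1/\sqrt2-o(1))\sqrt{p\log p}$.
\item The sphere-covering bound forces $\rho(\mathcal C)\ge p/2-O(\sqrt{p\log p})$ for any code of polynomial size.
\item Hence $\ell\approx 2\rho$ with defect $\approx\rho$ is impossible. Both $\ell$ and $\rho$ sit at $p/2-\Theta(\sqrt{p\log p})$, and the defect is only the difference $(b-a)\sqrt{p\log p}$.
\end{itemize}
``Near-middle'' means covering the whole cube with radius $p/2-b\sqrt{p\log p}$. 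It does not mean covering a middle weight band with small balls, which would need exponentially many codewords.

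The exponent then comes from three quantities. A random covering has size $p^{1+2b^2+o(1)}$. The order is $3p+\ell|\mathcal C|=p^{2+2b^2+o(1)}$, which tends to $p^{3}$ as $b\downarrow 1/\sqrt2$. The defect is $p^{1/2-o(1)}$. Together these give $N^{1/6-o(1)}$; your bound $|V(G)|\le\rho^{6+o(1)}$ has no mechanism behind it.

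Finally, disjoint unions do not increase $\chi-\psi$, and no padding is needed since $F$ is nondecreasing by definition.
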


The theorem changes the lower-bound scale in Proposition~\ref{prop:cc-transfer}
from $\log N/\log\log N$ to a fixed power of $N$.

We also prove a square-root upper bound.

\begin{theorem}[Square-root upper bound]\label{thm:upper-main}
If $G$ has $N$ vertices and $d=\chi(G)-\psi(G)$, then
\[
  d(d+1)\le6N.
\]
Consequently,
\[
  F(N)<\sqrt{6N}.
\]
\end{theorem}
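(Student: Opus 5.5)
We prove $d(d+1)\le 6N$ by peeling off maximum independent sets one at a time and showing that, while the chromatic number is still large compared with $\psi(G)$, each peeled set must contain many vertices.

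\textbf{Setup.} Write $k=\psi(G)$ and $\chi=\chi(G)=k+d$; we may assume $d\ge1$. Consider the sequence of induced subgraphs
\[
  G=G_0\supseteq G_1\supseteq\cdots\supseteq G_d ,
\]
where $G_{i+1}=G_i-S_i$ and $S_i$ is a maximum independent set of $G_i$. Removing an independent set lowers the chromatic number by at most one, so $\chi(G_i)\ge k+d-i$. Since $G_i$ is an induced subgraph of $G$, every odd cycle of $G_i$ is an odd cycle of $G$ with the same induced span. Hence $\psi(G_i)\le k$, and therefore $\chi(G_i)-\psi(G_i)\ge d-i$. The goal is the bound
\[
  |S_i|=\alpha(G_i)\ \ge\ c\,(d-i)\qquad(0\le i<d)
\]
with $c$ a constant such as $1/3$. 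Summing gives $N\ge \sum_{i<d}|S_i|\ge c\,d(d+1)/2$, which is $d(d+1)\le6N$ when $c=1/3$. Even if the per-step bound only comes out as $\alpha(G_i)\ge (d-i)/3$, this is exactly enough.

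\textbf{Key step.} We must show that if $H$ satisfies $\chi(H)\ge \psi(H)+m$, then $\alpha(H)\ge m/3$ (up to rounding). Set $t=\chi(H)$.
\begin{enumerate}[label=(\roman*)]
  \item Take a $t$-critical subgraph. By Polavarapu's connectivity theorem, a graph with large chromatic number contains a highly connected subgraph $K$: its connectivity is roughly $t/2$ or larger (some linear fraction of $t$), while its chromatic number stays close to $t$, losing at most a bounded fraction of $m$.
  \item Apply Chv\'atal--Erd\H{o}s to $K$: if $\alpha(K)\le\kappa(K)$, then $K$ is Hamiltonian. A Hamilton cycle of $K$ spans $V(K)$.
  \item If $|V(K)|$ is odd, this Hamilton cycle is an odd cycle, so $\psi(H)\ge\chi(K)$, which contradicts the chromatic gap $m$ once $K$ retains most of the chromatic number.
  \item If $|V(K)|$ is even, delete one vertex $v$. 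The graph $K-v$ still has connectivity $\kappa-1$ and independence number at most $\alpha(K)$, so it is Hamiltonian provided $\alpha\le\kappa-1$. This gives an odd Hamilton cycle on a set of chromatic number at least $\chi(K)-1$.
\end{enumerate}
Either way, the assumption $\alpha(H)\le\kappa(K)-1$ yields $\psi(H)\ge \chi(K)-1$. Contrapositively, $\alpha(H)\ge\alpha(K)\ge\kappa(K)$, which is a linear fraction of the chromatic number, certainly at least about $m/3$.

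\textbf{Main obstacle.} The delicate part is the bookkeeping in step (i). The statement needs $\kappa(K)$ to be at least about $m/3$ and $\chi(K)$ to exceed $\psi(H)+1$ at the same time, with constants tight enough that the final bound is exactly $6N$. The first attempt will use a Polavarapu-type statement of the form: every graph with $\chi\ge t$ has a subgraph with $\kappa\ge\lfloor t/2\rfloor$ roughly and $\chi$ nearly preserved.

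If chromatic number is lost in passing to $K$, fall back to the weaker form $\kappa(K)\ge m/3$ with $\chi(K)\ge\psi(H)+2$. That form suffices for the contradiction in steps (iii)--(iv): the odd Hamilton cycle of $K$ or $K-v$ spans a set of chromatic number at least $\chi(K)-1>\psi(H)$.

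The remaining checks are the edge cases $d-i$ small, where the bound $\alpha\ge1$ is trivial, and the inequality $\alpha(K)\le\alpha(H)$ for the subgraph $K\subseteq H$, which is immediate.
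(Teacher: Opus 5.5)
Your outer argument (stripping maximum independent sets, using $\chi(G_i)\ge\chi(G)-i$ and $\psi(G_i)\le\psi(G)$, and summing $\alpha(G_i)\ge(d-i)/3$) is the paper's, and so is the Chv\'atal--Erd\H{o}s parity trick. The gap is in the key step. That step is where the constant $3$, and hence the $6N$, must come from, and you leave it as unresolved ``bookkeeping'' resting on a misstatement of Polavarapu's theorem.

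The theorem does not give connectivity about $t/2$ with $\chi$ nearly preserved. It says $g(k,m)\le\max\{m+2k-2,3k+1\}$. So a $(k+1)$-connected subgraph is guaranteed only once $\chi\ge 3k+1$, i.e.\ connectivity about $\chi/3$, and it costs up to $2k-2$ colours. Your ``first attempt'' therefore rests on an unproved statement. Your fallback ($\kappa(K)\ge m/3$ together with $\chi(K)\ge\psi(H)+2$) is asserted but not derived.

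It is in fact derivable:
\begin{itemize}
\item For $m\ge4$, apply the theorem with $k=\lceil m/3\rceil-1\ge1$ and target chromatic number $\psi(H)+2$. Then $\psi(H)+2k\le\psi(H)+m\le\chi(H)$ and $3k+1\le m\le\chi(H)$, so the hypotheses hold.
\item For $m\le3$, the trivial bound $\alpha(H)\ge1$ suffices.
\end{itemize}
The paper instead applies the theorem with $k=\alpha(H)$ and target $\chi(H)-2\alpha(H)+2$. This gives $\chi-\psi\le 2\alpha-1$ when $\chi\ge 3\alpha+1$, and $\chi-\psi\le\chi\le 3\alpha$ otherwise. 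The two instantiations are contrapositive forms of the same argument, but one of them has to be written down and checked.

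Second, you claim that $\alpha(K)\le\alpha(H)$ is immediate for a subgraph $K\subseteq H$. That is false unless $K$ is induced: a spanning $C_5$ in $K_5$ has independence number $2>1$. The subgraph produced by Polavarapu's theorem need not be induced, so you must pass to $H'=H[V(K)]$. This graph still has $\kappa(H')\ge\kappa(K)$ and $\chi(H')\ge\chi(K)$, and now also $\alpha(H')\le\alpha(H)$. Apply Chv\'atal--Erd\H{o}s to $H'$ or $H'-v$. The order conditions are automatic, since $\kappa(K)\ge2$ forces order at least $3$, and an even order is then at least $4$.

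The correct conclusion of your contrapositive is then $\alpha(H)\ge\alpha(H')\ge\kappa(K)$, not $\alpha(H)\ge\alpha(K)\ge\kappa(K)$. With the instantiation above, this yields $\alpha(H)\ge\lceil m/3\rceil$ and closes the proof.
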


Together, Theorems~\ref{thm:lower-main} and~\ref{thm:upper-main} give
\[
  \frac16\le
  \liminf_{N\to\infty}\frac{\log F(N)}{\log N}
  \le
  \limsup_{N\to\infty}\frac{\log F(N)}{\log N}
  \le\frac12.
\]

The lower bound rests on an exact coding-theoretic conversion.  From a binary
code $\mathcal C\subseteq\{0,1\}^p$ and an integer $\ell$, we construct a graph
$G(p,\mathcal C,\ell)$ satisfying, under two explicit numerical conditions,
\[
  \chi(G(p,\mathcal C,\ell))=2p+\ell-\rho(\mathcal C),
  \qquad
  \psi(G(p,\mathcal C,\ell))=2p.
\]
The exact defect is $\ell-\rho(\mathcal C)$.  The covering radius controls
the global chromatic number.  Each odd cycle visits only linearly many attached
cliques, which admit a common recolouring inside the original palette.
Near-middle Hamming coverings yield the exponent $1/6$.

For the upper bound, Polavarapu's theorem on highly connected
subgraphs~\cite{Polavarapu2026} and the Chv\'atal--Erd\H{o}s Hamiltonicity
theorem~\cite{ChvatalErdos1972} yield
$\ohdef(G)\le3\alpha(G)$.  Maximum-independent-set stripping proves
Theorem~\ref{thm:upper-main}.  Nguyen's earlier connectivity bound
\cite{Nguyen2024} gives the coefficient $49/8$ by the same argument.

Section~\ref{sec:cc-transfer} proves Proposition~\ref{prop:cc-transfer}.
Section~\ref{sec:upper} proves the upper bound.  Section~\ref{sec:construction}
introduces the palette--code graph and proves the exact conversion.  Section~
\ref{sec:asymptotic} derives the polynomial lower bound.  The final section gives an exponent-transfer principle and a discrepancy
lower bound for near-critical codes.

\section{The Cameron--Clow transfer}\label{sec:cc-transfer}

The proof below extracts the graph order from the recursive construction in
Cameron and Clow~\cite[Sections~4--6]{CameronClow2025}.

\begin{proof}
Fix $r\ge6$ and put
\[
  m=\left\lfloor\frac r2\right\rfloor-1,
  \qquad
  n_k=|V(G_k^{(r)})|,
  \qquad
  s_k=r+k-1.
\]
The construction begins with $G_0^{(r)}=K_r$, so $n_0=r$.
At stage $k+1$, it first forms a graph $H_{k+1}^{(r)}$ from
$\binom{s_k}{2}$ disjoint copies of $G_k^{(r)}$, an independent set of
$s_k$ new vertices, and one further vertex.  Hence
\[
  |V(H_{k+1}^{(r)})|
  =\binom{s_k}{2}n_k+s_k+1.
\]
The construction then takes two extensions of $H_{k+1}^{(r)}$, one with two
new vertices and one with a single new vertex, and identifies two pairs of
vertices between them.  It follows that
\[
  n_{k+1}
  =2\binom{s_k}{2}n_k+2s_k+3.
\]
For $0\le k<m$, we have $s_k\le r+m-2\le3r/2$.  Since $n_k\ge r$, the last
display gives
\[
  n_{k+1}\le Cr^2n_k
\]
for an absolute constant $C$.  Iterating through the $m$ stages yields
\[
  n_m\le r(Cr^2)^m.
\]
Taking logarithms gives
\[
  \log n_m
  \le \log r+m(2\log r+\log C)
  =r\log r+O(r).
\]
For the terminal graph $H_r=G_m^{(r)}$, Cameron and Clow prove
\[
  \pathspan(H_r)\le r,
  \qquad
  \chi(H_r)\ge\left\lfloor\frac{3r}{2}\right\rfloor-1.
\]
Since $\psi(H_r)\le\pathspan(H_r)$,
\[
  \chi(H_r)-\psi(H_r)
  \ge \chi(H_r)-\pathspan(H_r)
  \ge \left\lfloor\frac r2\right\rfloor-1.
\]

For the inversion, let $N$ be large and write
\[
  L_1=\log N,\qquad L_2=\log\log N,\qquad
  L_3=\log\log\log N
\]
and choose
\[
  r=\left\lfloor\frac{L_1}{L_2+3L_3}\right\rfloor.
\]
Then $r=(1-o(1))L_1/L_2$ and $\log r\le L_2-L_3$ for all sufficiently large
$N$.  Hence
\[
  r\log r
  \le L_1\frac{L_2-L_3}{L_2+3L_3}
  =L_1-\frac{4L_1L_3}{L_2+3L_3}.
\]
The error term $O(r)=O(L_1/L_2)$ is smaller than the displayed margin, so
$\log n_m\le L_1$ for all sufficiently large $N$.  Hence $|V(H_r)|\le N$, and
\[
  F(N)\ge\left(\frac12-o(1)\right)
  \frac{\log N}{\log\log N}.
\]
\end{proof}

\section{The square-root upper bound}\label{sec:upper}

We write $\alpha(G)$ for the independence number of $G$ and $\kappa(G)$ for its vertex-connectivity.
For integers $k\ge1$ and $m\ge2$, let $g(k,m)$ be the least integer $q$ such
that every graph of chromatic number at least $q$ contains a
$(k+1)$-connected subgraph of chromatic number at least $m$.  This line of
work began with Thomassen's 1983 conjecture~\cite{Thomassen1983} and developed
through results of Alon, Kleitman, Thomassen, Saks and Seymour
\cite{AlonEtAl1987}, Chudnovsky, Penev, Scott and Trotignon
\cite{ChudnovskyEtAl2013}, Penev, Thomass\'e and Trotignon
\cite{PenevEtAl2016}, and Gir\~ao and Narayanan~\cite{GiraoNarayanan2022}.
Nguyen proved the following bound.

\begin{theorem}[Nguyen~\cite{Nguyen2024}]\label{thm:nguyen}
For all integers $k\ge1$ and $m\ge2$,
\[
  g(k,m)\le
  \max\left\{m+2k-2,
  \left\lceil\frac{49k}{16}\right\rceil\right\}.
\]
\end{theorem}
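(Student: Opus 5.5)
The plan is to argue by minimal counterexample, in the spirit of the Alon--Kleitman--Thomassen--Saks--Seymour and Girão--Narayanan arguments. Put $q=\max\{m+2k-2,\lceil 49k/16\rceil\}$. Suppose $G$ has $\chi(G)\ge q$ and contains no $(k+1)$-connected subgraph of chromatic number at least $m$, and choose such a $G$ with $|V(G)|+|E(G)|$ minimal. Deleting edges and vertices keeps the hypothesis "no good subgraph", so minimality forces $G$ to be $q$-vertex-critical. In particular $\delta(G)\ge q-1$ and $G$ is connected. Since $\chi(G)\ge q\ge m$, the graph $G$ is not itself $(k+1)$-connected. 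Hence there is a separation $(A,B)$ with $S=A\cap B$, $|S|\le k$, and both $A\setminus B$ and $B\setminus A$ nonempty.

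The first step is the standard gluing lemma. By criticality, $G[A]$ and $G[B]$ are both $(q-1)$-colourable. I would then try to merge these colourings across $S$. Let $S_A\subseteq S$ be the vertices of $S$ that have neighbours in $A\setminus B$, and similarly $S_B$. Recolour $G[B]$ so that it agrees with the colouring of $G[A]$ on $S$, spending at most $|S|$ fresh colours on the conflicting vertices. This gives $q-1\le\chi(G)\le \chi(G[A])+|S|$ on one side, and also an inequality for the other side.

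The second step is to make the separation extremal. Choose $(A,B)$ with $|S|$ minimal, and subject to that with $A\setminus B$ inclusion-minimal; that is, take a fragment in Mader's sense. Then apply the hypothesis to the subgraph $G[A]$ augmented by a clique on $S$, or to $G[A]$ with $S$ contracted. By minimality of $G$, this auxiliary graph $G'_A$ has no good $(k+1)$-connected subgraph unless one lives inside $G$ itself. Edges added inside $S$ must be handled: a $(k+1)$-connected subgraph of $G'_A$ using such an edge $xy$ can reroute $xy$ through a path in $B$, since $S$ is a minimum separator and Menger gives disjoint $S$-linkages in $B$. This yields a good subgraph of $G$, contradicting the choice of $G$. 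Hence $\chi(G'_A)<q$, so $\chi(G'_A)\le q-1$, and symmetrically for $B$. From two such colourings that agree on $S$ (where $S$ is a clique), we obtain a $(q-1)$-colouring of $G$, which is a contradiction. The term $m+2k-2$ enters exactly when the good subgraph found has chromatic number only $\ge q-2k+2$ after the at most $2|S|$ losses from the two sides.

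The main obstacle is the regime $m$ small relative to $k$, where the bound $\lceil 49k/16\rceil$ must be justified. Here the rerouting step can fail: $B$ need not contain $|S|$ disjoint linkages realising every added pair simultaneously, so the clique on $S$ cannot simply be added. My first attempt would be a counting/averaging argument. Use the minimum degree $\delta(G)\ge q-1$ to show that the fragment $A\setminus B$ has at least $q-k$ vertices. Then compare $|E(G[A])|$ against a Mader-type density threshold that forces a $(k+1)$-connected subgraph. Mader's $4k$ bound, refined via the colour-critical edge density bound (Gallai/Kostochka--Yancey type) $|E|\ge \tfrac{q}{2}|V|$ plus extra, should give a linear-in-$k$ threshold. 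I would then optimise the interplay between $|S|$, the fragment size, and the excess degree over $q-1$ by solving a quadratic inequality in the ratio $q/k$. I expect the optimum of that inequality to land at $49/16$, but verifying the sharp constant is the delicate, case-heavy part.
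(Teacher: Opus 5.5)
The paper does not prove this statement. It quotes it from Nguyen and uses it only in the remark following the proof of Theorem~\ref{thm:upper-main}. Judged on its own, your proposal has a genuine gap at its central step, and neither threshold is actually derived.

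\textbf{The rerouting step.} This step fails in every regime, not only for small $m$. Suppose a $(k+1)$-connected subgraph $H'$ of $G[A]$ plus a clique on $S$ uses an added edge $xy$. Replacing $xy$ by an $x$--$y$ path through $B\setminus A$ creates internal vertices of degree $2$, so the result is not $(k+1)$-connected once $k\ge2$. Removing $xy$ can also lower the chromatic number, and a path does not restore it. Putting a clique on a separator is a minor-type device (it amounts to contracting $B\setminus A$ onto $S$); neither subgraph containment nor chromatic number survives contraction.

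A sanity check shows this cannot be patched inside your framework. Apart from $q\ge m$ and $q\ge k+2$, your middle argument never uses the size of $q$. So with $k=2$ it would prove $g(2,m)\le m$ for every $m\ge4$; with $|S|\le 2$ at most one edge is added, so the minimality bookkeeping is fine. That conclusion is false. The Haj\'os join of two copies of $K_m$ is $m$-critical and has a $2$-vertex cut (the identified vertex together with an end of the new edge). Hence it contains no $3$-connected $m$-chromatic subgraph. For this graph your minimal fragment yields exactly $G'_A=K_m$, and no rerouting of its single added edge can succeed.

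\textbf{The thresholds.} The remark about ``at most $2|S|$ losses'' is attached to no step of your argument, and it would give $q-2k$ rather than $q-2k+2$. For $\lceil 49k/16\rceil$ you offer only an expectation. Mader-type edge-density bounds produce a $(k+1)$-connected subgraph but give no lower bound on its chromatic number; that subgraph may well be bipartite. Controlling that chromatic number is the entire content of the theorem.

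What is missing is a mechanism that ties the connectivity argument back to colourings of the critical graph $G$. For instance, let $H$ be an induced subgraph of $G$ such that only the vertices of some $X\subsetneq V(H)$ have neighbours outside $V(H)$. Extend a $(q-1)$-colouring of $G-(V(H)\setminus X)$ by a colouring of $H-X$ using $\chi(H)$ colours not used on $X$. This shows that $\chi(H)\le q-1-|X|$ is impossible, so $\chi(H)\ge q-|X|$.

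Some such link is indispensable. It must be paired with a structural argument that uses $\delta(G)\ge q-1$ to find a $(k+1)$-connected piece whose interaction with the rest of $G$ is controlled. That is where the constants $2k-2$ and $49/16$ have to be proved, and your proposal leaves this part entirely open.
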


Polavarapu sharpened this estimate as follows.

\begin{theorem}[Polavarapu~\cite{Polavarapu2026}]
\label{thm:polavarapu}
For all integers $k\ge1$ and $m\ge2$,
\[
  g(k,m)\le\max\{m+2k-2,3k+1\}.
\]
\end{theorem}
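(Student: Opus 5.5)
Since this theorem is quoted from~\cite{Polavarapu2026} and is only used as a black box later, I sketch the route I would take, following the strategy of Alon--Kleitman--Thomassen--Saks--Seymour, Gir\~ao--Narayanan and Nguyen. Put $q=\max\{m+2k-2,3k+1\}$ and let $G$ satisfy $\chi(G)\ge q$. Passing to a $q$-vertex-critical subgraph, I may assume $\chi(G)=q$, $\delta(G)\ge q-1\ge 3k$, and that every proper induced subgraph is $(q-1)$-colourable. Among all subgraphs $H\subseteq G$ with $\chi(H)\ge m$, I then choose one minimising a potential that combines vertex count, edge count and a correction for small separations, in the spirit of Mader's minimal-counterexample argument. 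The goal is to show that the minimiser is $(k+1)$-connected.

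The core step is a gluing lemma for separations. Suppose $H$ has a separation $(A,B)$ with $S=A\cap B$, $|S|\le k$, and both $A\setminus B$ and $B\setminus A$ nonempty. Take proper colourings of $H[A]$ and $H[B]$ and permute the colours of one so that they agree on as much of $S$ as possible. I expect the following bound to hold:
\[
\chi(H)\le\max\{\chi(H[A]),\chi(H[B])\}+|S|-1 .
\]
If it fails to give the required drop, I would reserve $|S|$ fresh colours for $S$. Criticality makes both sides nontrivial. Hence, whenever $\chi(H)\ge m+2k-2$ and $H$ is not $(k+1)$-connected, one side, say $H[A]$ together with a gadget on $S$ (I would add a clique on $S$ or a few edges), still has chromatic number at least $m+2k-2$ and a strictly smaller potential. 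This is the contradiction with minimality. The term $m+2k-2$ is exactly what this step consumes: roughly $k$ colours are lost to $S$, and another $k-2$ are lost because the side used in the recursion carries the added gadget.

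The remaining difficulty, which I expect to be the main obstacle, is the threshold $3k+1$. It handles small $m$, where the chromatic surplus is too small for the gluing argument to recurse, and one must instead produce a $(k+1)$-connected subgraph from density alone. Here I would run a Mader-type edge-counting argument. Take $H$ minimal subject to $|E(H)|\ge c_k|V(H)|-b_k$ for suitable constants. A separation of order at most $k$ splits $H$ into two pieces, one of which, by counting, violates minimality unless $c_k$ is of order $3k/2$. The minimum degree $\ge q-1\ge 3k$ supplies $|E|\ge \tfrac{3k}{2}|V|$. Nguyen's version loses a factor and gets $49k/16$. To reach $3k+1$, I would try to sharpen the counting at the critical separations, those with $|S|=k$ and one small side. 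I would use the degree bound $\delta\ge 3k$ inside the small side to show that it has at least $3k+1-k$ vertices outside $S$, and that the edge count across the separation is large enough to remove the slack. Finally I would verify that the resulting $(k+1)$-connected subgraph retains $\chi\ge m$ by merging this with the colour-gluing step.
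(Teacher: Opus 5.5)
The paper gives no proof of this statement. It is quoted from Polavarapu and used only as a black box in Lemma~\ref{lem:alpha-polavarapu}, so in context a citation suffices. Read as a proof, however, your sketch has genuine gaps in both of its steps.

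In the gluing step the minimality contradiction is not valid. You minimise a potential, which is never defined, over subgraphs $H\subseteq G$. But $H[A]$ with a clique or extra edges added on $S$ is not a subgraph of $G$, so it is not a competitor in that minimisation. With a clique gadget the colour count is actually fine: colourings of $H[A]+K_S$ and $H[B]+K_S$ can be permuted to agree on $S$, which gives $\chi(H)\le\max\{\chi(H[A]+K_S),\chi(H[B]+K_S)\}$. The problem is that the recursion then ends in a $(k+1)$-connected graph that may contain non-edges of $G$. Deleting those edges can destroy both the connectivity and the chromatic number. Without gadgets, your inequality $\chi(H)\le\max\{\chi(H[A]),\chi(H[B])\}+|S|-1$ (true when $S\neq\varnothing$) costs up to $k-1$ colours at every separation. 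Nothing bounds how many separations the recursion passes through, so the total loss is not capped at $2k-2$. The heuristic ``$k$ colours to $S$ plus $k-2$ to the gadget'' does not replace this accounting.

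The $3k+1$ step fails more fundamentally. A minimality argument for $|E(H)|\ge c_k|V(H)|-b_k$ is a pure average-degree argument. The textbook Mader count closes only around $c_k=2k$ (average degree $4k$), and pushing it to $c_k\approx\tfrac{3}{2}k$ is essentially Mader's conjecture, not a routine count. Your proposed sharpening cannot bridge this, because the minimal $H$ inherits only $\delta(H)>c_k$. The bound $\delta\ge 3k$ and criticality hold in $G$, not at the separations of $H$ where you want to use them.

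More importantly, density carries no chromatic information at all: $K_{k+1,k+1}$ is $(k+1)$-connected and bipartite. In the range $m\le k+3$, where $3k+1$ is the operative term, the whole content of the theorem therefore sits in the unspecified ``merging with the colour-gluing step'' of your last sentence. Reaching $3k+1$ requires feeding the colouring structure of a critical graph into the analysis of the small separations themselves. Attaching a density argument to a lossy gluing bound does not do this, and that is the missing idea.
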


\begin{theorem}[Chv\'atal--Erd\H{o}s~\cite{ChvatalErdos1972}]
\label{thm:chvatal-erdos}
If a graph $H$ has order at least three and
\[
  \kappa(H)\ge\alpha(H),
\]
then $H$ is Hamiltonian.
\end{theorem}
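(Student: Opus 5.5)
The plan is the classical longest-cycle argument of Chv\'atal and Erd\H{o}s, which I would reproduce in three steps: dispose of the degenerate cases, choose a longest cycle, and turn a leftover component into an independent set that is too large.

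\textbf{Degenerate cases.} Let $k=\kappa(H)$. Since $H$ has order at least three, $\alpha(H)\ge1$, so $k\ge1$. If $\alpha(H)=1$, then $H$ is complete of order at least three and is trivially Hamiltonian. So assume $k\ge\alpha(H)\ge2$. Then $H$ is $2$-connected, hence contains a cycle, and I fix a longest cycle $C$ with a cyclic orientation. For $x\in V(C)$, write $x^+$ for its successor on $C$.

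\textbf{A leftover component gives many attachment points.} Suppose $C$ is not Hamiltonian, and let $D$ be a component of $H-V(C)$. Let $S\subseteq V(C)$ be the set of vertices of $C$ with a neighbour in $D$. I claim $|S|\ge k$. First, $|V(C)|\ge k+1$: a standard fan argument (Dirac) gives a cycle of length at least $\min\{|V(H)|,2k\}$, and $2k\ge k+1$. Now $S$ separates $D$ from $V(C)\setminus S$ whenever the latter is nonempty. If instead $S=V(C)$, then already $|S|\ge k+1$. In either case $|S|\ge k$. Alternatively, one applies Menger's fan lemma from a vertex of $D$ to $V(C)$ to obtain $k$ paths ending at distinct vertices of $C$ and internally disjoint from $C$; their ends lie in $S$.

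\textbf{The independent set.} Pick $v\in V(D)$ and set $I=\{v\}\cup\{x^+:x\in S\}$. This has $|S|+1\ge k+1$ elements. I would show $I$ is independent, which contradicts $\alpha(H)\le k$. There are three checks, each using maximality of $C$.
\begin{enumerate}[label=(\roman*)]
\item No $x^+$ lies in $S$ or in $D$. If $x^+$ had a neighbour in $D$, we could replace the edge $xx^+$ by a path through $D$ and obtain a longer cycle.
\item $v$ is not adjacent to any $x^+$. This follows from (i), since $v$'s neighbours on $C$ lie in $S$ and then $x^+\in S$.
\item No edge $x^+y^+$ exists for distinct $x,y\in S$. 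If it did, the cycle formed by $x\to(\text{path through }D)\to y$, then $C$ backwards from $y$ to $x^+$, then the edge $x^+y^+$, then $C$ forwards from $y^+$ back to $x$, would be longer than $C$.
\end{enumerate}

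I expect the main obstacle to be step (iii). The rerouting has to be written carefully: the $D$-path between $x$ and $y$ (which exists because $D$ is connected) must be combined with the two arcs of $C$ so that every vertex of $C$ is still used exactly once and at least one vertex of $D$ is added. The lower bound $|S|\ge k$, including the case analysis that guarantees $C$ is long enough, is the other point needing care. Everything else is routine.
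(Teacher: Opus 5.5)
The paper does not prove this statement; it cites it from Chv\'atal and Erd\H{o}s, and your argument is their classical longest-cycle proof, which is correct. One simplification: you do not need Dirac's circumference bound. Your step (i) already gives $x^+\notin S$ for every $x\in S$, and $S\neq\varnothing$ because $H$ is connected, so $V(C)\setminus S\neq\varnothing$ and $S$ is a genuine separator of size at least $k$.
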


We first bound the defect in terms of the independence number.

\begin{lemma}[An independence-number bound]
\label{lem:alpha-polavarapu}
Every graph $G$ satisfies
\[
  \chi(G)-\psi(G)\le3\alpha(G).
\]
\end{lemma}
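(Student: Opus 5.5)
We plan to apply Theorem~\ref{thm:polavarapu} with connectivity parameter tied to $\alpha=\alpha(G)$, then use Theorem~\ref{thm:chvatal-erdos} to make the resulting highly connected subgraph Hamiltonian. A Hamiltonian subgraph $H$ of odd order carries an odd cycle spanning $V(H)$, so $\psi(G)\ge\chi(G[V(H)])\ge\chi(H)$. The even-order case needs one more step, handled below.

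Set $\alpha=\alpha(G)$, $\chi=\chi(G)$, and suppose for contradiction that $\chi-\psi(G)\ge 3\alpha+1$. If $G$ is bipartite then $\chi\le 2\le 3\alpha$ whenever $G$ has an edge, and the edgeless case is trivial, so we may assume $\psi(G)\ge3$. We take $k=\alpha$ and $m=\psi(G)+1$. Then
\[
  m+2k-2=\psi(G)+2\alpha-1\le\chi,
  \qquad
  3k+1=3\alpha+1\le\chi,
\]
so Theorem~\ref{thm:polavarapu} gives an $(\alpha+1)$-connected subgraph $H\subseteq G$ with $\chi(H)\ge\psi(G)+1$. Since $\alpha(H)\le\alpha$ we get $\kappa(H)\ge\alpha+1>\alpha(H)$, and $|V(H)|\ge\alpha+2\ge3$, so $H$ is Hamiltonian by Theorem~\ref{thm:chvatal-erdos}.

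\textbf{Parity obstacle.} If $|V(H)|$ is odd, the Hamilton cycle is an odd cycle $C$ with $\chi(G[V(C)])\ge\chi(H)>\psi(G)$, a contradiction. If $|V(H)|$ is even, we delete one vertex $v$. Then $H-v$ is $\alpha$-connected with $\alpha(H-v)\le\alpha$, and it still has order at least three, so Theorem~\ref{thm:chvatal-erdos} again gives a Hamilton cycle of $H-v$. This cycle has odd length, and $\chi(H-v)\ge\chi(H)-1\ge\psi(G)$. This comparison only reaches equality, not strict inequality, and that is the weak point.

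To fix it, we will use the slack in the hypothesis and choose $m=\psi(G)+2$ instead. The Polavarapu condition then becomes $\psi(G)+2\alpha\le\chi$, which still follows from $\chi\ge\psi(G)+3\alpha+1$ since $\alpha\ge1$. Now $\chi(H-v)\ge\psi(G)+1$, and $H-v$, or $H$ itself, contains an odd cycle spanning a vertex set of chromatic number exceeding $\psi(G)$. This gives the desired contradiction. We must also check that $H$ is non-bipartite so that its Hamilton cycle can be odd, but the parity argument above already handles that.
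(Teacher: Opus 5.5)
Your final argument, with $m=\psi(G)+2$, follows the paper's route: Theorem~\ref{thm:polavarapu} with $k=\alpha(G)$, then Theorem~\ref{thm:chvatal-erdos}, then deleting one vertex when the order is even. The paper takes $m=\chi(G)-2\alpha(G)+2$ directly rather than arguing by contradiction, but the difference is cosmetic.

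There is one step that fails as written. Theorem~\ref{thm:polavarapu} gives an $(\alpha+1)$-connected \emph{subgraph} $H$, not necessarily an induced one. For a non-induced subgraph, the claim ``$\alpha(H)\le\alpha$'' is false in general. For example, $G=K_n$ has $\alpha(G)=1$ but contains the $2$-connected subgraph $C_n$ with $\alpha(C_n)=\lfloor n/2\rfloor$. So the hypothesis $\kappa(H)\ge\alpha(H)$ of Theorem~\ref{thm:chvatal-erdos} is unjustified for $H$, and likewise for $H-v$.

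The repair is the one the paper makes: replace $H$ by $G[V(H)]$. Adding edges on a fixed vertex set cannot lower vertex-connectivity or chromatic number. An induced subgraph of $G$ has independence number at most $\alpha(G)$. After this substitution your parity argument goes through unchanged. The odd Hamilton cycle of $G[V(H)]-v$ spans a graph of chromatic number at least $\chi(H)-1\ge\psi(G)+1$, which gives the contradiction.

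The closing remark about checking that $H$ is non-bipartite is unnecessary. An odd-order Hamiltonian graph automatically has an odd Hamilton cycle.
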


\begin{proof}
The assertion is immediate for the empty graph.  Let
\[
  a=\alpha(G),\qquad q=\chi(G),
\]
and assume that $G$ is nonempty.  If $q\le3a$, the claim follows from
$\psi(G)\ge0$.  Assume $q\ge3a+1$, and set
\[
  m=q-2a+2.
\]
Then $m\ge2$, $m+2a-2=q$, and $3a+1\le q$.
Theorem~\ref{thm:polavarapu}, applied with $k=a$, gives an
$(a+1)$-connected subgraph $K$ of $G$ with
\[
  \chi(K)\ge q-2a+2.
\]
Let $H=G[V(K)]$.  Passing to the induced graph can only add edges, and hence
\[
  \kappa(H)\ge a+1,
  \qquad
  \chi(H)\ge q-2a+2,
  \qquad
  \alpha(H)\le a.
\]

If $|V(H)|$ is odd, Theorem~\ref{thm:chvatal-erdos} gives an odd Hamilton
cycle of $H$.  Thus
\[
  \psi(G)\ge\chi(H)\ge q-2a+2.
\]
If $|V(H)|$ is even, remove any vertex $v\in V(H)$.  The standard inequality
$\kappa(H-v)\ge\kappa(H)-1$ gives
\[
  \kappa(H-v)\ge a\ge\alpha(H-v),
\]
and
\[
  \chi(H-v)\ge q-2a+1\ge3.
\]
Theorem~\ref{thm:chvatal-erdos} gives an odd Hamilton cycle of $H-v$, so
\[
  \psi(G)\ge\chi(H-v)\ge q-2a+1.
\]
In both cases,
\[
  \chi(G)-\psi(G)\le2a-1\le3a.
\]
\end{proof}

\begin{proof}[Proof of Theorem~\ref{thm:upper-main}]
Write
\[
  q=\chi(G),\qquad p=\psi(G),\qquad d=q-p.
\]
There is nothing to prove when $d=0$.  Put $G_0=G$.  For each $i\ge0$, as
long as $G_i$ is nonempty, choose a maximum independent set $I_i$ of $G_i$,
write
\[
  a_i=|I_i|=\alpha(G_i),
\]
and set $G_{i+1}=G_i-I_i$.

Deleting an independent set lowers chromatic number by at most one.  Thus,
for $0\le i\le d-1$,
\[
  \chi(G_i)\ge q-i.
\]
Since $G_i$ is an induced subgraph of $G$, we have $\psi(G_i)\le p$.
Lemma~\ref{lem:alpha-polavarapu} gives
\[
  d-i=q-p-i
  \le \chi(G_i)-p
  \le \chi(G_i)-\psi(G_i)
  \le3a_i.
\]
The sets $I_0,\ldots,I_{d-1}$ are pairwise disjoint, so
\[
  N\ge\sum_{i=0}^{d-1}a_i
   \ge\frac13\sum_{i=0}^{d-1}(d-i)
   =\frac{d(d+1)}6.
\]
\end{proof}

\begin{remark}
Using Theorem~\ref{thm:nguyen} in the same proof gives
\[
  \chi(G)-\psi(G)\le\frac{49}{16}\alpha(G)
\]
and
\[
  d(d+1)\le\frac{49}{8}N.
\]
\end{remark}

\section{The palette--code construction}\label{sec:construction}

For a positive integer $p$, write $[p]=\{1,\ldots,p\}$.  For
$x,y\in\{0,1\}^p$, let $\distH(x,y)$ denote their Hamming distance.  For an
integer $r\ge0$, put
\[
  B_p(r)=\sum_{j=0}^{r}\binom pj,
\]
where $\binom pj=0$ for $j>p$.
For a nonempty code $\mathcal C\subseteq\{0,1\}^p$, its covering radius is
\[
  \rho(\mathcal C)=
  \max_{y\in\{0,1\}^p}\min_{x\in\mathcal C}\distH(x,y).
\]
For background on covering radius and Hamming-space coverings, see the survey
of Cohen, Karpovsky, Mattson and Schatz~\cite{CohenEtAl1985} and the monograph
of Cohen, Honkala, Litsyn and Lobstein~\cite{CoveringCodes1997}.

Fix $p\ge2$, a nonempty code $\mathcal C\subseteq\{0,1\}^p$, and an integer
$\ell\ge1$.  We define a graph $G(p,\mathcal C,\ell)$.

First construct a boundary graph $S_p$.  Let
\[
  P=\{a_i^0,a_i^1:i\in[p]\}
\]
induce a clique of order $2p$, and let
\[
  Z=\{z_i:i\in[p]\}
\]
induce a clique of order $p$.  For every $i\in[p]$, join $z_i$ to every
vertex of $P$ except $a_i^0$ and $a_i^1$.  Hence $|V(S_p)|=3p$ and
$\chi(S_p)=2p$: the clique $P$ gives the lower bound, while an upper bound is
obtained by giving $P$ distinct colours and colouring $z_i$ with the colour of
$a_i^0$.  These colours are distinct as $i$ varies, so the colouring is proper
on the clique $Z$.

For each $x=(x_1,\ldots,x_p)\in\mathcal C$, define
\[
  R_x=Z\cup\{a_i^{1-x_i}:i\in[p]\}.
\]
Add a clique $Q_x$ of order $\ell$, make $Q_x$ complete to $R_x$ and
anticomplete to $S_p\setminus R_x$, and make the cliques
$\{Q_x:x\in\mathcal C\}$ pairwise anticomplete.  Figure~\ref{fig:construction}
shows the incidence pattern for one codeword.

\begin{figure}[H]
\centering
\begin{tikzpicture}[
  scale=.88,
  every node/.style={font=\small},
  v/.style={circle,draw,minimum size=6.5mm,inner sep=0pt},
  sel/.style={v,fill=black!18},
  qbox/.style={draw,rounded corners,minimum width=2.1cm,minimum height=2.2cm},
  jedge/.style={densely dashed,thick,black!60}
]
  \node at (.6,3.05) {$P=K_6$};
  \node[v]   (a10) at (0,2) {$a_1^0$};
  \node[sel] (a11) at (1.2,2) {$a_1^1$};
  \node[sel] (a20) at (0,0) {$a_2^0$};
  \node[v]   (a21) at (1.2,0) {$a_2^1$};
  \node[v]   (a30) at (0,-2) {$a_3^0$};
  \node[sel] (a31) at (1.2,-2) {$a_3^1$};

  \node at (3.05,3.05) {$Z=K_3$};
  \node[sel] (z1) at (3.05,2) {$z_1$};
  \node[sel] (z2) at (3.05,0) {$z_2$};
  \node[sel] (z3) at (3.05,-2) {$z_3$};
  \draw (z1)--(z2)--(z3)--(z1);

  \foreach \u in {a20,a21,a30,a31} {\draw[black!35] (z1)--(\u);}
  \foreach \u in {a10,a11,a30,a31} {\draw[black!35] (z2)--(\u);}
  \foreach \u in {a10,a11,a20,a21} {\draw[black!35] (z3)--(\u);}

  \node[qbox] (q) at (6.05,0) {};
  \node at (6.05,1.45) {$Q_x=K_\ell$};
  \node[v,minimum size=5mm] (q1) at (5.65,.45) {};
  \node[v,minimum size=5mm] (q2) at (6.45,.45) {};
  \node[v,minimum size=5mm] (q3) at (6.05,-.45) {};
  \draw (q1)--(q2)--(q3)--(q1);

  \foreach \u in {a11,a20,a31,z1,z2,z3} {\draw[jedge] (q.west)--(\u);}
\end{tikzpicture}
\caption{A schematic of the palette--code construction for $p=3$ and
$x=(0,1,0)$.  The shaded vertices form $R_x$.  Clique edges inside $P$ are
omitted, and each dashed line represents adjacency from every vertex of
$Q_x$ to the indicated vertex of $R_x$.  The clique $Z$ is drawn explicitly,
and $z_i$ is nonadjacent only to its own coordinate pair in $P$.}
\label{fig:construction}
\end{figure}

A vector $y\in\{0,1\}^p$ defines a \emph{canonical} $2p$-colouring of $S_p$:
give the vertices of $P$ distinct colours and give $z_i$ the colour of
$a_i^{y_i}$.

\begin{observation}[Exact palette count]\label{obs:palette-count}
In the canonical colouring associated with $y$, the set $R_x$ uses exactly
\[
  2p-\distH(x,y)
\]
palette colours.  Equivalently, exactly $\distH(x,y)$ palette colours are
absent from $R_x$.
\end{observation}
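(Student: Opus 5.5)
The proof will be a direct count, carried out coordinate by coordinate. In the canonical colouring associated with $y$, the $2p$ vertices of $P$ receive the $2p$ distinct palette colours, and $z_i$ receives the colour of $a_i^{y_i}$. Since the coordinate pairs $\{a_i^0,a_i^1\}$ partition $P$, the palette also splits into $p$ disjoint pairs, one for each $i\in[p]$, namely the colours of $a_i^0$ and $a_i^1$. The plan is to check, for each $i$, how many colours of the $i$-th pair appear on $R_x$, and then sum over $i$.

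Fix $i\in[p]$. The only vertices of $R_x$ whose colour lies in the $i$-th pair are the following two.
\begin{itemize}
\item The vertex $a_i^{1-x_i}$, since the other vertices of $R_x\cap P$ belong to other pairs.
\item The vertex $z_i$, which carries the colour of $a_i^{y_i}$. For $j\ne i$, the vertex $z_j$ carries a colour from the $j$-th pair.
\end{itemize}
The colours on $R_x$ from the $i$-th pair are therefore exactly those of $a_i^{1-x_i}$ and $a_i^{y_i}$.
\begin{itemize}
\item If $x_i\ne y_i$, then $y_i=1-x_i$, so only one colour of the pair is used and one is absent.
\item If $x_i=y_i$, then $1-x_i\ne y_i$, so both colours are used.
\end{itemize}
So the number of absent colours in the $i$-th pair is the indicator of $x_i\ne y_i$.

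Summing over the $p$ disjoint pairs, the number of palette colours absent from $R_x$ is $\#\{i:x_i\ne y_i\}=\distH(x,y)$. The number used is therefore $2p-\distH(x,y)$.

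There is no real obstacle. The only care needed is to confirm that colours from different coordinate pairs cannot coincide, which holds because $P$ is coloured injectively. It then follows that $z_j$ for $j\ne i$ cannot contribute to the $i$-th pair.
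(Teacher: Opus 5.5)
Your proposal is correct and follows the paper's proof: you check coordinate by coordinate that the $i$-th palette pair contributes to $R_x$ exactly the colours of $a_i^{1-x_i}$ and $a_i^{y_i}$, so one colour is absent precisely when $x_i\ne y_i$, and then you sum over $i$. You also state explicitly that injectivity on $P$ keeps the pairs' colours disjoint, so that $z_j$ with $j\ne i$ cannot touch the $i$-th pair, which the paper leaves implicit.
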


\begin{proof}
For coordinate $i$, the selected palette vertex in $R_x$ is
$a_i^{1-x_i}$.  If $x_i=y_i$, then $z_i$ has the colour of $a_i^{x_i}$, so
both coordinate colours occur on $R_x$.  If $x_i\ne y_i$, then
$y_i=1-x_i$, and $z_i$ repeats the colour of $a_i^{1-x_i}$, so exactly one
coordinate colour occurs.  Summing over the coordinates proves the claim.
\end{proof}

\begin{lemma}[Global chromatic number]\label{lem:global-chi}
If $\rho(\mathcal C)<\ell$, then
\[
  \chi(G(p,\mathcal C,\ell))
  =2p+\ell-\rho(\mathcal C).
\]
\end{lemma}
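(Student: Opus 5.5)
The proof has two directions: an upper bound by an explicit colouring, and a lower bound by showing every proper colouring needs that many colours. Throughout write $G=G(p,\mathcal C,\ell)$ and $\rho=\rho(\mathcal C)$.

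\emph{Upper bound.} Choose $y\in\{0,1\}^p$ with $\min_{x\in\mathcal C}\distH(x,y)=\rho$, which exists by the definition of covering radius. Colour $S_p$ canonically with respect to $y$, using the palette $[2p]$. For each $x\in\mathcal C$, Observation~\ref{obs:palette-count} says that $R_x$ misses exactly $\distH(x,y)\ge\rho$ palette colours. Colour $Q_x$ by giving $\rho$ of its vertices distinct missing palette colours. This is possible since $\rho<\ell$ and $\rho\le\distH(x,y)$. Give the remaining $\ell-\rho$ vertices of $Q_x$ the new colours $2p+1,\ldots,2p+\ell-\rho$. These new colours may be reused across different $x$, because the cliques $Q_x$ are pairwise anticomplete and each $Q_x$ sees only $R_x$. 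The colouring is proper, with $2p+\ell-\rho$ colours.

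\emph{Lower bound.} Fix a proper colouring $c$ of $G$ and show that it uses at least $2p+\ell-\rho$ colours. The clique $P$ receives $2p$ distinct colours; call these the palette. For each $i$, the vertex $z_i$ is adjacent to all of $P$ except $a_i^0,a_i^1$. So $z_i$ either takes a non-palette colour or takes the colour of $a_i^0$ or $a_i^1$.

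First, if some $z_i$ takes a non-palette colour, I would modify $c$ so that this does not happen. Alternatively, I would set $y_i$ arbitrarily for such $i$ and count directly. Define $y_i=b$ if $c(z_i)=c(a_i^b)$, and choose $y_i$ arbitrarily otherwise.

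Then fix $x\in\mathcal C$ with $\distH(x,y)\ge\rho$. Such an $x$ exists only if $y$ is a deep hole, so this step needs care; the easier route is to pick $x$ minimising distance. Since the covering radius is a maximum over $y$, we only know $\min_x\distH(x,y)\le\rho$, so one cannot assume $\distH(x,y)\ge\rho$ for all $x$. Instead pick the codeword $x$ nearest to $y$, with $\distH(x,y)=t\le\rho$.

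Now count the palette colours on $R_x$. For coordinates with $x_i=y_i$ and $z_i$ palette-coloured, both coordinate colours appear on $R_x$. In general, the colours appearing on $R_x$ from coordinate $i$ number at least one palette colour, plus either a second palette colour or a non-palette colour on $z_i$. So $R_x$ contains at least $p+(p-t)$ colours, and at most $t$ palette colours are missing from $R_x$. The clique $Q_x$ is complete to $R_x$, so it can use at most $t$ palette colours. Its other at least $\ell-t\ge\ell-\rho$ vertices need colours outside the palette that also avoid any non-palette colours on $Z\cap R_x$. Hence the total number of colours is at least $2p+\ell-t\ge2p+\ell-\rho$.

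The main obstacle is doing this counting cleanly when some $z_i$ take non-palette colours. Those colours are themselves extra colours outside the palette. Each such coordinate reduces the palette colours on $R_x$ by at most one, but it adds a non-palette colour forbidden to $Q_x$. So the bound ``palette colours used $+$ non-palette colours $\ge 2p+\ell-t$'' still holds, which closes the argument.
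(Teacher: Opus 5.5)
Your proof is correct and follows the paper's argument. The upper bound uses the same deep-hole canonical colouring with a shared pool of $\ell-\rho$ new colours. The lower bound reads off the same vector $y$ from the colours of the $z_i$, takes a nearest codeword $x$ at distance $t\le\rho$, and counts $|c(R_x)|\ge 2p-t$. Once you have that count you can finish immediately, as the paper does: $Q_x$ is a clique complete to $R_x$, so it adds $\ell$ further colours. This avoids your intermediate claim that at most $t$ palette colours are missing from $R_x$, which is false when a matching $z_i$ takes a non-palette colour, and the repair paragraph it needs. You should also delete the retracted step that chooses $x$ with $\distH(x,y)\ge\rho$.
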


\begin{proof}
Write $G=G(p,\mathcal C,\ell)$ and $\rho=\rho(\mathcal C)$.  Let $\varphi$ be
an arbitrary proper colouring of $G$.  Since $P$ is a clique, its vertices
receive $2p$ distinct colours.  Any palette colour used by $z_i$ must be the
colour of $a_i^0$ or $a_i^1$.  If
$\varphi(z_i)=\varphi(a_i^{y_i})$, choose that value of $y_i$; if $z_i$ uses
a colour outside the palette, choose $y_i$ arbitrarily.  This defines
$y\in\{0,1\}^p$.  Choose $x\in\mathcal C$ with $\distH(x,y)\le\rho$.

For a matching coordinate $x_i=y_i$, the selected vertex
$a_i^{1-x_i}$ and $z_i$ contribute two distinct colours.  For a mismatching
coordinate they contribute at least one colour.  Palette colours from distinct
coordinates are distinct because $P$ is a clique.  If $z_i$ uses a colour
outside the palette, that colour is new, and all such outside colours are
pairwise distinct because $Z$ is a clique.  Hence
\[
  |\varphi(R_x)|\ge
  2\bigl(p-\distH(x,y)\bigr)+\distH(x,y)
  \ge2p-\rho.
\]
Since the clique $Q_x$ is complete to $R_x$, its $\ell$ colours are all
new relative to $R_x$.  Hence
\[
  \chi(G)\ge2p+\ell-\rho.
\]

For the reverse inequality, choose $y\in\{0,1\}^p$ with
\[
  \min_{x\in\mathcal C}\distH(x,y)=\rho
\]
and use the associated canonical colouring of $S_p$.  Introduce one common
pool of $\ell-\rho$ new colours.  For each $x$, first colour
$\min\{\ell,\distH(x,y)\}$ vertices of $Q_x$ with distinct palette colours
absent from $R_x$, whose number is given by Observation~\ref{obs:palette-count}.
Colour the remaining
\[
  \max\{0,\ell-\distH(x,y)\}
\]
vertices with the first colours from the common pool.  Since
$\distH(x,y)\ge\rho$, the pool is large enough.  Distinct cliques $Q_x$ are
anticomplete, so the same pool may be reused.  This gives a
$(2p+\ell-\rho)$-colouring of $G$.
\end{proof}

\begin{lemma}[Cycle-local recolouring]\label{lem:cycle-local}
Suppose that
\[
  \ell\le2p
  \qquad\text{and}\qquad
  3p\,B_p(\ell-1)<2^p.
\]
Then every odd cycle $C$ of $G(p,\mathcal C,\ell)$ satisfies
\[
  \chi(G[V(C)])\le2p.
\]
Moreover, $G(p,\mathcal C,\ell)$ contains an odd cycle whose vertex set spans
a graph of chromatic number $2p$.  Consequently,
\[
  \psi(G(p,\mathcal C,\ell))=2p.
\]
\end{lemma}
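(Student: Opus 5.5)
The plan is to prove the upper bound $\chi(G[V(C)])\le2p$ by a counting argument. First I show that an odd cycle meets only few of the cliques $Q_x$. Then a single vector $y$ chosen far from all of them makes the canonical colouring extend with no new colours. The lower bound comes from an explicit odd cycle inside $S_p$.

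\emph{Step 1 (few cliques are touched).} Write $G=G(p,\mathcal C,\ell)$ and let $C$ be an odd cycle. The cliques $Q_x$ are pairwise anticomplete and attach only to $S_p$.
\begin{itemize}
\item If $C$ avoids $S_p$, it lies inside a single $Q_x$. Then $\chi(G[V(C)])\le\ell\le2p$ by the first hypothesis, and we are done.
\item Otherwise, every maximal segment of $C$ inside $\bigcup_x Q_x$ lies inside one $Q_x$, and it is immediately preceded along $C$ by a vertex of $S_p$.
\end{itemize}
Assign each such segment to the vertex of $S_p$ preceding it. Each vertex of $C\cap S_p$ precedes at most one segment, so the number of segments is at most $|V(C)\cap V(S_p)|\le3p$. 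Hence $C$ meets $Q_x$ for at most $3p$ codewords, say $x^{(1)},\dots,x^{(m)}$ with $m\le3p$.

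\emph{Step 2 (choice of $y$).} I want $y\in\{0,1\}^p$ with $\distH(x^{(j)},y)\ge\ell$ for every $j$. The vectors $y$ that fail lie in the union of the $m$ Hamming balls of radius $\ell-1$ around the $x^{(j)}$. This union has size at most $3p\,B_p(\ell-1)<2^p$ by the second hypothesis, so a suitable $y$ exists.

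\emph{Step 3 (the colouring).} Take the canonical $2p$-colouring of $S_p$ associated with $y$.
\begin{itemize}
\item By Observation~\ref{obs:palette-count}, each $R_{x^{(j)}}$ misses exactly $\distH(x^{(j)},y)\ge\ell$ palette colours.
\item So each clique $Q_{x^{(j)}}$ can be coloured with $\ell$ distinct palette colours absent from $R_{x^{(j)}}$. This is proper, because $Q_{x^{(j)}}$ is adjacent only to $R_{x^{(j)}}$ outside itself, and different $Q$'s are anticomplete.
\end{itemize}
This gives a proper $2p$-colouring of $G\bigl[V(S_p)\cup\bigcup_j Q_{x^{(j)}}\bigr]$. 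Restricting it to $V(C)$ gives $\chi(G[V(C)])\le2p$.

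\emph{Step 4 (a cycle attaining $2p$).} Consider $P\cup\{z_1\}$, which has $2p+1$ vertices, an odd number.
\begin{itemize}
\item $z_1$ is adjacent to the $2p-2\ge2$ vertices $a_i^b$ with $i\ne1$; this uses $p\ge2$.
\item Pick two of these neighbours $u,w$. Route a Hamilton path of the clique $P$ from $u$ to $w$ and close it through $z_1$. This is an odd cycle on $P\cup\{z_1\}$.
\end{itemize}
Its span contains the clique $P$, so its chromatic number is at least $2p$. It is also at most $2p$, since it is a subgraph of $S_p$ and $\chi(S_p)=2p$. Combining this with the upper bound from Steps 1–3 gives $\psi(G)=2p$.

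The only delicate point is the segment-counting in Step 1: one has to make sure that distinct visits to cliques are charged to distinct vertices of $S_p$. Charging each segment to its predecessor on the cycle handles this cleanly.
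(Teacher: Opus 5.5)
Your proof is correct and follows the paper's argument step for step: the bound of at most $3p$ visited cliques, the choice of $y$ outside the radius-$(\ell-1)$ balls, the canonical colouring extended by palette colours absent from each $R_x$, and the $(2p+1)$-cycle on $P\cup\{z_i\}$. The only difference is cosmetic and lies in the counting: you charge each maximal segment in $\bigcup_x Q_x$ to its predecessor on $C$, whereas the paper double-counts cycle edges between $Q_x$ and $S_p$; both give $|J|\le|V(C)\cap V(S_p)|\le3p$.
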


\begin{proof}
Write $G=G(p,\mathcal C,\ell)$, and let $C$ be an odd cycle.  If $C$ is
disjoint from $S_p$, then it lies in one clique $Q_x$, because every vertex
outside $S_p$ lies in some $Q_x$ and distinct attached cliques are
anticomplete.  Hence $\chi(G[V(C)])\le\ell\le2p$.

Suppose that $C$ meets $S_p$, and let
\[
  J=\{x\in\mathcal C:V(C)\cap Q_x\ne\varnothing\}.
\]
For each $x\in J$, every component of $C\cap Q_x$ is a path whose two
end-edges join $S_p$, because $Q_x$ has no edges to any other attached clique.
Let $e_C(Q_x,S_p)$ denote the number of cycle edges crossing between $Q_x$ and
$S_p$.  Hence $e_C(Q_x,S_p)\ge2$, including when $C$ visits the same $Q_x$
several times.  Each vertex of $C\cap S_p$ is incident with at most two cycle
edges,
and hence
\[
  2|J|
  \le\sum_{x\in J}e_C(Q_x,S_p)
  \le2|V(C)\cap S_p|.
\]
Thus $|J|\le3p$.
The radius-$(\ell-1)$ balls centred at $J$ do not cover the Hamming cube,
because
\[
  |J|B_p(\ell-1)\le3pB_p(\ell-1)<2^p.
\]
Choose $y\in\{0,1\}^p$ outside their union.  Then
$\distH(x,y)\ge\ell$ for every $x\in J$.  Under the canonical colouring
associated with $y$, Observation~\ref{obs:palette-count} supplies at least
$\ell$ palette colours absent from each $R_x$.  Colour every visited $Q_x$
injectively with such colours.  Since the $Q_x$ are pairwise anticomplete,
this gives a $2p$-colouring of
\[
  G\left[S_p\cup\bigcup_{x\in J}Q_x\right],
\]
and in particular of $G[V(C)]$.

For the reverse inequality, fix $i\in[p]$.  Since $p\ge2$, the vertex $z_i$
has at least two neighbours in the clique $P$.  A complete graph has a
Hamilton path with any prescribed pair of distinct vertices as its ends.
Choose two neighbours of $z_i$ as these ends and add $z_i$.  The resulting cycle has length $2p+1$,
and its vertex set spans a graph containing the clique $P$.  Hence its span
has chromatic number $2p$.
\end{proof}

\begin{corollary}[Exact code-to-defect conversion]\label{cor:code-defect}
Suppose that
\[
  \rho(\mathcal C)<\ell\le2p
  \qquad\text{and}\qquad
  3p\,B_p(\ell-1)<2^p.
\]
Then
\[
  |V(G(p,\mathcal C,\ell))|=3p+\ell|\mathcal C|,
\]
\[
  \chi(G(p,\mathcal C,\ell))=2p+\ell-\rho(\mathcal C),
  \qquad
  \psi(G(p,\mathcal C,\ell))=2p,
\]
and
\[
  \ohdef(G(p,\mathcal C,\ell))=\ell-\rho(\mathcal C).
\]
\end{corollary}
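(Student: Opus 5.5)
This corollary assembles the two preceding lemmas together with a vertex count. Write $G=G(p,\mathcal C,\ell)$ and $\rho=\rho(\mathcal C)$.

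\emph{Order.} By construction, $V(G)$ is the disjoint union of $V(S_p)$ and the cliques $Q_x$ for $x\in\mathcal C$. We have $|V(S_p)|=|P|+|Z|=2p+p=3p$, and each $Q_x$ has exactly $\ell$ vertices. Distinct codewords give distinct cliques, and the code is a set, so
\[
  |V(G)|=3p+\ell|\mathcal C|.
\]

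\emph{Chromatic number.} The hypothesis $\rho<\ell$ is exactly the hypothesis of Lemma~\ref{lem:global-chi}, so that lemma gives
\[
  \chi(G)=2p+\ell-\rho.
\]

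\emph{Odd-cycle span.} The hypotheses $\ell\le2p$ and $3p\,B_p(\ell-1)<2^p$ are exactly those of Lemma~\ref{lem:cycle-local}. That lemma needs $p\ge2$ and $\ell\ge1$, which are standing assumptions of the construction. It gives
\[
  \psi(G)=2p.
\]

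\emph{Defect.} Subtracting the two identities gives
\[
  \ohdef(G)=\chi(G)-\psi(G)=\ell-\rho.
\]

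\emph{Potential obstacles.} Nothing here is hard once the lemmas are in hand. The only points to check are that the vertex count does not double-count shared vertices between $S_p$ and the attached cliques, and that no hidden nondegeneracy assumption of the lemmas is missing. Neither is a real issue: the cliques $Q_x$ consist of new vertices, and the corollary's hypotheses are exactly the union of the two lemmas' hypotheses.
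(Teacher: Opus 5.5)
Your proposal is correct and matches the paper's proof. The paper likewise reads off the order from the construction, cites Lemmas~\ref{lem:global-chi} and~\ref{lem:cycle-local} for the two identities, and obtains the defect by subtraction.
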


\begin{proof}
The order formula follows from the construction, and the two exact identities
are Lemmas~\ref{lem:global-chi} and~\ref{lem:cycle-local}.
\end{proof}

\section{Covering codes and the polynomial lower bound}
\label{sec:asymptotic}

We need two standard facts about the Hamming cube.  The first is the usual
probabilistic covering estimate, in the spirit of classical nonconstructive
bounds of Cohen~\cite{Cohen1983} and Cohen--Frankl~\cite{CohenFrankl1985}.
The second is a near-middle estimate for Hamming balls, a special case of
standard binomial moderate-deviation asymptotics~\cite{BahadurRao1960}.

\begin{lemma}[Probabilistic covering bound]\label{lem:covering}
For integers $p\ge1$ and $0\le r\le p$, there is a code
$\mathcal C\subseteq\{0,1\}^p$ of covering radius at most $r$ and size
\[
  |\mathcal C|\le
  \left\lceil
  \frac{2^p}{B_p(r)}\bigl(p\log2+1\bigr)
  \right\rceil.
\]
\end{lemma}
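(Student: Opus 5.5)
The plan is the standard random-code argument. Set $V=B_p(r)\ge1$ and $M=\bigl\lceil \frac{2^p}{V}(p\log 2+1)\bigr\rceil$. Choose $M$ words $x^{(1)},\ldots,x^{(M)}$ independently and uniformly from $\{0,1\}^p$; repetitions are allowed. Let $\mathcal C$ be the set of chosen words, so $|\mathcal C|\le M$. We will show that with positive probability every $y\in\{0,1\}^p$ lies within Hamming distance $r$ of some chosen word, which means $\rho(\mathcal C)\le r$. Note that $\mathcal C$ is nonempty because $M\ge1$.

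First fix $y$. The ball of radius $r$ about $y$ contains exactly $V$ words, and a uniform word lands in it with probability $V/2^p$. By independence, the probability that no chosen word lands in it is
\[
  \Bigl(1-\frac{V}{2^p}\Bigr)^{M}\le \exp\Bigl(-\frac{MV}{2^p}\Bigr)\le \exp\bigl(-(p\log2+1)\bigr)=e^{-1}2^{-p}.
\]
This uses $1-t\le e^{-t}$ and $M\ge \frac{2^p}{V}(p\log2+1)$.

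Next take a union bound over all $2^p$ choices of $y$. The probability that some $y$ is uncovered is at most $2^p\cdot e^{-1}2^{-p}=e^{-1}<1$. Hence some outcome covers every $y$, and its code $\mathcal C$ has covering radius at most $r$ and size at most $M$, as required.

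No step here is a real obstacle. The only points needing care are the edge cases: the case $r=p$, where $V=2^p$ and the bound is trivially satisfied, and the observation that duplicate draws only shrink $|\mathcal C|$. The additive $+1$ in $p\log2+1$ is exactly what pushes the union bound strictly below $1$.
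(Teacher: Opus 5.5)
Your proposal is correct and follows the paper's argument: draw $M$ uniform words and bound the probability that a fixed point is uncovered by $\exp(-MB_p(r)/2^p)$. You then finish with a union bound over the $2^p$ points, which is the paper's ``expected number of uncovered points is less than $1$'' in different words, and discard repeated centres.
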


\begin{proof}
Choose $M$ points of $\{0,1\}^p$ independently and uniformly, where $M$ is
the integer on the right.  A fixed point is missed by all radius-$r$ balls
with probability
\[
  \left(1-\frac{B_p(r)}{2^p}\right)^M
  \le \exp\left(-\frac{MB_p(r)}{2^p}\right).
\]
Hence the expected number of uncovered points is at most
\[
  2^p\exp\left(-\frac{MB_p(r)}{2^p}\right)<1.
\]
Since this number is a nonnegative integer, some choice leaves no point
uncovered.  Removing repeated centres gives the required code.
\end{proof}

Related polynomial-size coverings in the near-middle regime also appear in
work of Rabani and Shpilka~\cite{RabaniShpilka2010}; see Bazzi~\cite{Bazzi2019}
for further discussion of small codes and covering radius.

\begin{lemma}[Near-middle Hamming balls]\label{lem:moderate}
For every fixed $c>0$,
\[
  2^{-p}B_p\left(
  \left\lfloor\frac p2-c\sqrt{p\log p}\right\rfloor
  \right)
  =p^{-2c^2+o(1)}
\]
as $p\to\infty$.  The same estimate holds if the displayed radius is
changed by $O(1)$.
\end{lemma}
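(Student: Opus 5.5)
The plan is to estimate the lower tail of the binomial distribution directly, writing $2^{-p}B_p(r)=\Pr[X\le r]$ with $X\sim\mathrm{Bin}(p,1/2)$ and $r=\lfloor p/2-t\rfloor$, where $t=c\sqrt{p\log p}$. Since $t=o(p^{2/3})$, we are in the moderate-deviation regime, and I expect
\[
  \Pr[X\le p/2-t]=\exp\!\left(-\frac{2t^2}{p}\bigl(1+o(1)\bigr)\right)\cdot p^{o(1)}.
\]
Because $2t^2/p=2c^2\log p$, this gives $p^{-2c^2+o(1)}$. Both directions of the estimate are needed, and I will prove them separately.

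\textbf{Upper bound.} This follows from Hoeffding's inequality,
\[
  \Pr[X\le p/2-t]\le \exp(-2t^2/p)=p^{-2c^2}.
\]
If $r$ is shifted by $O(1)$, then $t$ changes by $O(1)$. The exponent then changes by $O(t/p)=o(1)$, which is negligible even before comparing to $\log p$.

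\textbf{Lower bound.} Here I will keep only the single term $\binom{p}{r}$, since $B_p(r)\ge\binom pr$. Set $r=p/2-s$ with $s=t+O(1)$. Stirling's formula gives
\[
  2^{-p}\binom{p}{p/2-s}=\Theta(p^{-1/2})\exp\!\left(-pD\!\left(\tfrac12-\tfrac sp\,\Big\|\,\tfrac12\right)\right),
\]
where $D$ denotes binary relative entropy. Its Taylor expansion is
\[
  pD=\frac{2s^2}{p}+O\!\left(\frac{s^4}{p^3}\right).
\]
The error term $s^4/p^3=O\bigl((\log p)^2/p\bigr)\to0$, and the difference between $2s^2/p$ and $2t^2/p$ is $O(t/p)=o(1)$. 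The lower bound is therefore $p^{-1/2}\cdot p^{-2c^2}\cdot e^{o(1)}=p^{-2c^2+o(1)}$, because the factor $p^{-1/2}$ is absorbed into $p^{o(1)}$ after taking logarithms base $p$.

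The main obstacle is making this Stirling/entropy step uniform when $r$ is shifted by $O(1)$. I will handle it by working with a general $s$ in the window $t+O(1)$ and checking that all error terms are $o(\log p)$ in the exponent. This is routine, since $s=\Theta(\sqrt{p\log p})$ keeps $s/p\to0$, so the Taylor expansion of $D$ near $1/2$ is valid with room to spare.
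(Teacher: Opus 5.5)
Your Hoeffding upper bound is fine. Your single-term Stirling computation is also correct as far as it goes: it shows $2^{-p}\binom pr=p^{-1/2-2c^2+o(1)}$, uniformly for $O(1)$ shifts of $r$.

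The gap is the last step of the lower bound, where you say the factor $p^{-1/2}$ ``is absorbed into $p^{o(1)}$ after taking logarithms base $p$.'' It is not, since $\log_p(p^{-1/2})=-1/2$ is a fixed constant. Keeping only the term $\binom pr$ therefore proves just $2^{-p}B_p(r)\ge p^{-1/2-2c^2+o(1)}$, which misses the claim by a polynomial factor. The loss is genuine. The true tail is of order $p^{-2c^2}/\sqrt{\log p}$, so the single term undercounts by roughly $\sqrt{p/\log p}$. It also matters downstream. The lower bound on $B_p(r_p)$ is what controls $|\mathcal C_p|$ in Theorem~\ref{thm:seed-family}, and your weaker estimate would give $|\mathcal C_p|\le p^{3/2+2b^2+o(1)}$, lowering the final exponent from $1/6$ to $1/7$.

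The missing idea is to recover the factor $p^{1/2}$ by summing many terms below $r$, all comparable to $\binom pr$. Let $t=p/2-r$ and $0\le j\le p/(8t)$. Each factor below is $1-O((t+s+1)/p)$, so
\[
  \frac{\binom p{r-j}}{\binom pr}=\prod_{s=0}^{j-1}\frac{r-s}{p-r+s+1}
  \ge\exp\left[-O\left(\frac{tj}{p}+\frac{j^2}{p}+\frac jp\right)\right].
\]
In this range $tj/p\le1/8$ and $j^2/p=O(1/\log p)$, so every ratio is bounded below by a positive absolute constant. There are $\Theta(\sqrt{p/\log p})=p^{1/2-o(1)}$ admissible $j$, hence $B_p(r)\ge p^{1/2-o(1)}\binom pr$. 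Combined with your Stirling estimate, this gives $2^{-p}B_p(r)\ge p^{-2c^2+o(1)}$.

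This is exactly how the paper closes the lower bound. The uniformity under $O(1)$ shifts of $r$, which you identified as the main obstacle, is the easy part; the real issue is this polynomial prefactor.
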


\begin{proof}
Put
\[
  r=\left\lfloor\frac p2-c\sqrt{p\log p}\right\rfloor,
  \qquad
  t=\frac p2-r=c\sqrt{p\log p}+O(1).
\]
If $X\sim\operatorname{Bin}(p,1/2)$, the Chernoff bound gives
\[
  2^{-p}B_p(r)
  =\Pr(X\le p/2-t)
  \le \exp\left(-\frac{2t^2}{p}\right)
  =p^{-2c^2+o(1)}.
\]
For the lower bound, Stirling's formula in entropy form yields
\[
  2^{-p}\binom pr
  =\frac{1+o(1)}{\sqrt{\pi p/2}}
    \exp\left(-\frac{2t^2}{p}
    +O\left(\frac{t^4}{p^3}\right)\right)
  =p^{-1/2-2c^2+o(1)}.
\]
For $0\le j\le p/(8t)$,
\[
  \frac{\binom p{r-j}}{\binom pr}
  =\prod_{s=0}^{j-1}\frac{r-s}{p-r+s+1}
  \ge
  \exp\left[-O\left(\frac{tj}{p}+\frac{j^2}{p}+\frac jp\right)\right].
\]
Indeed, taking logarithms and using
$\log(1-u)\ge-u-O(u^2)$ gives the displayed estimate uniformly in this
range.  Here $tj/p\le1/8$, $j^2/p=O(1/\log p)$, and $j/p=o(1)$, so the ratio
is bounded below by a positive constant independent of $p$ and $j$.  The
number of terms is
\[
  \left\lfloor\frac{p}{8t}\right\rfloor+1
  =\Theta\left(\sqrt{\frac p{\log p}}\right)
  =p^{1/2-o(1)}.
\]
Summing these terms below $r$ gives
\[
  2^{-p}B_p(r)\ge p^{-2c^2+o(1)}.
\]
Changing $r$ by $O(1)$ changes the estimate only by a factor $p^{o(1)}$.
\end{proof}

We now choose the two radii needed in Corollary~\ref{cor:code-defect}.

\begin{theorem}[Polynomial seed family]\label{thm:seed-family}
Fix real numbers
\[
  \frac1{\sqrt2}<a<b.
\]
For every $\varepsilon$ with $0<\varepsilon<b-a$ and all sufficiently large
integers $p$, there is a graph $G_p$ satisfying
\[
  |V(G_p)|\le p^{2+2b^2+\varepsilon}
\]
and
\[
  \ohdef(G_p)\ge
  (b-a-\varepsilon)\sqrt{p\log p}.
\]
\end{theorem}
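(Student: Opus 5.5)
We apply Corollary~\ref{cor:code-defect} with two radii near the middle of the cube. Set
\[
  r=\left\lfloor\frac p2-b\sqrt{p\log p}\right\rfloor,
  \qquad
  \ell=\left\lfloor\frac p2-a\sqrt{p\log p}\right\rfloor+1.
\]
The code $\mathcal C$ will have covering radius at most $r$, and $\ell$ will be the size of the attached cliques. The expected defect is then
\[
  \ell-\rho(\mathcal C)\ge\ell-r\ge(b-a)\sqrt{p\log p}-O(1)\ge(b-a-\varepsilon)\sqrt{p\log p}
\]
for large $p$.

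The plan has three steps.

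First, Lemma~\ref{lem:covering} supplies $\mathcal C$ with $\rho(\mathcal C)\le r$ and
\[
  |\mathcal C|\le\left\lceil\frac{2^p}{B_p(r)}(p\log2+1)\right\rceil.
\]
By Lemma~\ref{lem:moderate} with $c=b$, we have $2^p/B_p(r)=p^{2b^2+o(1)}$, so $|\mathcal C|\le p^{2b^2+1+o(1)}$.

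Second, we check the hypotheses of Corollary~\ref{cor:code-defect}.
\begin{itemize}
\item The inequality $\rho(\mathcal C)\le r<\ell$ holds because $b>a$, so $\ell-r\to\infty$.
\item The bound $\ell\le p/2+1\le 2p$ is trivial.
\item The key condition is $3p\,B_p(\ell-1)<2^p$. Here
\[
  \ell-1=\left\lfloor\frac p2-a\sqrt{p\log p}\right\rfloor,
\]
so Lemma~\ref{lem:moderate} with $c=a$ gives $2^{-p}B_p(\ell-1)=p^{-2a^2+o(1)}$. Hence $3p\,B_p(\ell-1)2^{-p}=p^{1-2a^2+o(1)}\to0$, precisely because $a>1/\sqrt2$.
\end{itemize}
So the condition holds for all sufficiently large $p$.

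Third, we apply the corollary. The graph $G_p=G(p,\mathcal C,\ell)$ has
\[
  |V(G_p)|=3p+\ell|\mathcal C|\le 3p+p\cdot p^{2b^2+1+o(1)}=p^{2+2b^2+o(1)},
\]
which is at most $p^{2+2b^2+\varepsilon}$ for large $p$. It also has $\ohdef(G_p)=\ell-\rho(\mathcal C)\ge\ell-r$, which gives the claimed defect bound.

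The main obstacle is only bookkeeping. The $o(1)$ exponents from Lemma~\ref{lem:moderate} must be absorbed into $\varepsilon$ and into the strict inequality $a>1/\sqrt2$. We must also note that the $+1$ shift in $\ell$ is an $O(1)$ change in radius, which the last clause of Lemma~\ref{lem:moderate} permits. The hypothesis $\varepsilon<b-a$ only guarantees that the defect bound is positive and meaningful. It is not otherwise needed.
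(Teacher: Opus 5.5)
Your proof is correct and matches the paper's argument: the same two near-middle radii, the random covering code of Lemma~\ref{lem:covering} with Lemma~\ref{lem:moderate} at $c=b$ to bound its size, the same lemma at $c=a$ to verify $3p\,B_p(\ell-1)<2^p$ using $2a^2>1$, and then Corollary~\ref{cor:code-defect}. The only difference is your $+1$ in $\ell$. It makes $\ell-1$ exactly the radius in Lemma~\ref{lem:moderate}, so, contrary to your closing remark, you do not need that lemma's $O(1)$-shift clause at all; it also gives $\ell-r>(b-a)\sqrt{p\log p}$ with no loss. The paper takes $\ell_p$ without the shift, so it invokes that clause for $B_p(\ell_p-1)$ and loses one in $\ell_p-r_p$.
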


\begin{proof}
Fix $\varepsilon>0$, and set
\[
  \ell_p=
  \left\lfloor\frac p2-a\sqrt{p\log p}\right\rfloor,
  \qquad
  r_p=
  \left\lfloor\frac p2-b\sqrt{p\log p}\right\rfloor.
\]
For all sufficiently large $p$,
\[
  0\le r_p<\ell_p\le2p.
\]
Lemma~\ref{lem:moderate} gives
\[
  \frac{B_p(\ell_p-1)}{2^p}=p^{-2a^2+o(1)}.
\]
Since $2a^2>1$,
\[
  3pB_p(\ell_p-1)<2^p
\]
for all sufficiently large $p$.

By Lemmas~\ref{lem:covering} and~\ref{lem:moderate}, there is a code
$\mathcal C_p\subseteq\{0,1\}^p$ with
\[
  \rho(\mathcal C_p)\le r_p.
\]
The ceiling in Lemma~\ref{lem:covering} contributes at most one, and the main
term tends to infinity.  Hence, for all sufficiently large $p$,
\[
  |\mathcal C_p|
  \le p^{1+2b^2+\varepsilon/2}.
\]
In particular,
\[
  \rho(\mathcal C_p)\le r_p<\ell_p,
\]
so all hypotheses of Corollary~\ref{cor:code-defect} are satisfied for
\[
  G_p=G(p,\mathcal C_p,\ell_p).
\]
It follows that
\[
  \ohdef(G_p)
  =\ell_p-\rho(\mathcal C_p)
  \ge\ell_p-r_p.
\]
The floor functions cost at most one in this difference, so
\[
  \ell_p-r_p\ge(b-a)\sqrt{p\log p}-1
  \ge(b-a-\varepsilon)\sqrt{p\log p}
\]
for all sufficiently large $p$.  Moreover, since $\ell_p\le p$,
\[
  |V(G_p)|
  =3p+\ell_p|\mathcal C_p|
  \le p^{2+2b^2+\varepsilon}
\]
for all sufficiently large $p$.
\end{proof}

\begin{remark}[Relative scale of the construction]
The codes in Theorem~\ref{thm:seed-family} have polynomial size.  The
sphere-covering inequality
\[
  2^p\le |\mathcal C_p|B_p(\rho(\mathcal C_p))
\]
and the Chernoff upper bound imply
\[
  \rho(\mathcal C_p)\ge\frac p2-O(\sqrt{p\log p}).
\]
Since also $\ell_p=p/2-O(\sqrt{p\log p})$, the exact conversion gives
\[
  \ohdef(G_p)=O(\sqrt{p\log p})=o(p).
\]
As $\psi(G_p)=2p$, it follows that $\chi(G_p)/\psi(G_p)\to1$.  The
construction gives a polynomially large additive gap within this asymptotically
unit ratio.
\end{remark}

\begin{proof}[Proof of Theorem~\ref{thm:lower-main}]
Fix $\delta>0$.  Choose $b>1/\sqrt2$ and $\eta>0$ so small that
\[
  \frac{1}{2(2+2b^2+\eta)}>\frac16-\frac\delta2,
\]
and then choose $a$ with $1/\sqrt2<a<b$.  Put
\[
  \Gamma=2+2b^2+\eta.
\]
Choose
\[
  0<\varepsilon_0<\min\left\{\eta,\frac{b-a}{2}\right\}
\]
and apply Theorem~\ref{thm:seed-family} with error parameter
$\varepsilon_0$.  Then, for all sufficiently large $p$, there is a graph
$G_p$ with
\[
  |V(G_p)|\le p^\Gamma
\]
and
\[
  \ohdef(G_p)\ge c\sqrt{p\log p},
  \qquad
  c=b-a-\varepsilon_0>0.
\]

Given sufficiently large $N$, take
\[
  p=\left\lfloor N^{1/\Gamma}\right\rfloor.
\]
Then $|V(G_p)|\le N$.  Moreover, for all sufficiently large $N$,
\[
  c\sqrt{p\log p}
  \ge N^{1/(2\Gamma)-\delta/2}
  \ge N^{1/6-\delta}.
\]
Hence $F(N)\ge N^{1/6-\delta}$.
\end{proof}

The same construction may also be expressed in the following linear-seed form: for every
$\varepsilon>0$, there are arbitrarily large integers $t$ and graphs $G$ with
\[
  \ohdef(G)\ge t,
  \qquad
  |V(G)|\le t^{6+\varepsilon}.
\]
To see this, choose $a$ and $b$ sufficiently close to $1/\sqrt2$, apply
Theorem~\ref{thm:seed-family} with a sufficiently small error parameter, and
set $t=\lfloor c\sqrt{p\log p}\rfloor$ for the resulting constant $c>0$.

\section{Exponent transfer and discrepancy}

The palette--code construction converts the size and covering radius of a code
into a lower-bound exponent for $F$.  The following proposition records this
transfer.

\begin{proposition}[Exponent transfer]\label{prop:conditional-transfer}
Let $a_p,b_p$ be sequences satisfying
\[
  \frac1{\sqrt2}<a_p<b_p,
  \qquad
  \log\frac1{b_p-a_p}=o(\log p),
\]
and put
\[
  \ell_p=
  \left\lfloor\frac p2-a_p\sqrt{p\log p}\right\rfloor.
\]
Assume that
\[
  3pB_p(\ell_p-1)<2^p
\]
and that there are codes $\mathcal C_p\subseteq\{0,1\}^p$ such that
\[
  \rho(\mathcal C_p)
  \le
  \left\lfloor\frac p2-b_p\sqrt{p\log p}\right\rfloor,
  \qquad
  |\mathcal C_p|=p^{\beta+o(1)}
\]
for some constant $\beta\ge0$.  For
\[
  N_p=|V(G(p,\mathcal C_p,\ell_p))|,
\]
one has
\[
  F(N_p)\ge N_p^{1/(2+2\beta)-o(1)}.
\]
\end{proposition}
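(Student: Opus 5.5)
The plan is to apply Corollary~\ref{cor:code-defect} directly to $G_p=G(p,\mathcal C_p,\ell_p)$. Then we estimate the order $N_p$ from above and the defect $\ell_p-\rho(\mathcal C_p)$ from below, both in terms of $p$. Finally we eliminate $p$, exactly as in the proof of Theorem~\ref{thm:lower-main}.

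\textbf{Hypotheses of the corollary.} Put $r_p=\lfloor p/2-b_p\sqrt{p\log p}\rfloor$. Since $b_p>a_p$ we get $r_p\le\ell_p$, but strict inequality needs the gap $(b_p-a_p)\sqrt{p\log p}$ to exceed the rounding loss of at most $1$.

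The assumption $\log(1/(b_p-a_p))=o(\log p)$ gives $b_p-a_p\ge p^{-o(1)}$. Hence
\[
  (b_p-a_p)\sqrt{p\log p}\ge p^{1/2-o(1)}\to\infty,
\]
and so, for large $p$,
\[
  \rho(\mathcal C_p)\le r_p<\ell_p.
\]

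We also need $\ell_p\ge1$ and $\ell_p\le 2p$. The bound $\ell_p\le p/2\le 2p$ is immediate. For $\ell_p\ge1$ we use the chain $0\le\rho(\mathcal C_p)\le r_p<\ell_p$. Note that $r_p\ge0$ is forced because $\rho\ge0$, given that the hypothesis on $\rho$ is satisfiable. The condition $3pB_p(\ell_p-1)<2^p$ is assumed outright.

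\textbf{Order and defect.} Corollary~\ref{cor:code-defect} then yields
\[
  N_p=3p+\ell_p|\mathcal C_p|,
  \qquad
  \ohdef(G_p)=\ell_p-\rho(\mathcal C_p)\ge\ell_p-r_p\ge(b_p-a_p)\sqrt{p\log p}-1=p^{1/2-o(1)}.
\]

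For the order, $\ell_p\le p$ together with $|\mathcal C_p|=p^{\beta+o(1)}$ gives
\[
  N_p\le 3p+p^{1+\beta+o(1)}=p^{1+\beta+o(1)}.
\]
There is also the trivial bound $N_p\ge3p$, which tells us that $\log N_p\asymp\log p$ up to constants. This is what allows $o(1)$ terms in the exponent of $p$ to be converted into $o(1)$ terms in the exponent of $N_p$.

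\textbf{Elimination of $p$.} We have
\[
  p\ge N_p^{1/(1+\beta)-o(1)},
\]
and therefore
\[
  F(N_p)\ge\ohdef(G_p)\ge N_p^{\frac{1}{2(1+\beta)}-o(1)}=N_p^{1/(2+2\beta)-o(1)}.
\]

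The main point of care is the rounding issue in establishing $\rho(\mathcal C_p)<\ell_p$. This is handled by the subpolynomial-gap hypothesis, which ensures that $b_p-a_p$ does not shrink fast enough to make the difference $\ell_p-r_p$ nonpositive or of lower polynomial order. Everything else is bookkeeping of $o(1)$ exponents.
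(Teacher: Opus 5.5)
Your proposal is correct and follows the paper's route. You check the hypotheses of Corollary~\ref{cor:code-defect}, where the subpolynomial gap $b_p-a_p\ge p^{-o(1)}$ absorbs the floor loss and gives $\rho(\mathcal C_p)\le r_p<\ell_p$; you then bound the defect below by $(b_p-a_p)\sqrt{p\log p}-1=p^{1/2-o(1)}$ and the order above by $p^{1+\beta+o(1)}$, and eliminate $p$. Your bookkeeping is slightly tighter than the paper's, since you use only $\ell_p\le p$ and $N_p\ge 3p$, whereas the paper asserts $\ell_p=\Theta(p)$ without argument; that claim is true here, but it needs a sphere-covering argument to keep $a_p<b_p$ bounded.
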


\begin{proof}
The hypotheses imply $\rho(\mathcal C_p)<\ell_p$ for all sufficiently large
$p$, so Corollary~\ref{cor:code-defect} applies.  Since
$\log(1/(b_p-a_p))=o(\log p)$,
\[
  \ohdef(G(p,\mathcal C_p,\ell_p))
  \ge (b_p-a_p)\sqrt{p\log p}-1
  =p^{1/2-o(1)}.
\]
Moreover, $\ell_p=\Theta(p)$ and hence
\[
  |V(G(p,\mathcal C_p,\ell_p))|
  =3p+\ell_p|\mathcal C_p|
  =p^{1+\beta+o(1)}.
\]
Eliminating $p$ gives the stated exponent.
\end{proof}

For the random covering used in Section~\ref{sec:asymptotic}, the code-size
exponent approaches $\beta=2$ as the two coefficients approach $1/\sqrt2$,
recovering the exponent $1/6$.  The sphere-covering inequality suggests
$\beta=1$, corresponding to exponent $1/4$.  Spencer's discrepancy theorem
forces the code-size exponent above $1$ in the near-critical regime.

\begin{theorem}[Spencer's set-system discrepancy bound]
\label{thm:spencer}
There is an absolute constant $K>0$ such that, for every family
$A_1,\ldots,A_m\subseteq[n]$, there is a sign vector
$\varepsilon\in\{\pm1\}^n$ satisfying
\[
  \max_{j\in[m]}
  \left|\sum_{i\in A_j}\varepsilon_i\right|
  \le K\sqrt{n\log\left(\frac mn+2\right)}.
\]
\end{theorem}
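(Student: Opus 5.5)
Since Theorem~\ref{thm:spencer} is a classical result, I would reproduce the standard partial-colouring (entropy) argument rather than derive it from anything earlier in the paper. First I reduce to the case $m\ge n$: adding empty sets only enlarges the family, and $\log(m/n+2)$ changes by at most a constant factor. Throughout, write $\Lambda(n',m)=\sqrt{n'\log(2m/n')}$ for a ground set of size $n'\le n$.

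\textbf{Step 1 (partial colouring lemma).} I would prove: there are absolute constants $c_0\in(0,1)$ and $K_0$ such that every family of $m\ge n'$ sets on a ground set of size $n'$ admits $\chi\in\{-1,0,1\}^{n'}$ with at least $c_0n'$ nonzero entries and
\[
  \max_j\Bigl|\sum_{i\in A_j}\chi_i\Bigr|\le K_0\Lambda(n',m).
\]
For $x\in\{0,1\}^{n'}$, set $b_j(x)=\operatorname{round}\bigl((\sum_{i\in A_j}x_i-|A_j|/2)/(2\Delta)\bigr)$ with $\Delta=K_0\Lambda(n',m)/2$. Under uniform $x$, Hoeffding gives $\Pr(|b_j|\ge s)\le 2e^{-c s^2\Delta^2/|A_j|}$, and $|A_j|\le n'$. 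Hence the entropy of $b_j$ is $O\bigl(e^{-c\Delta^2/n'}\bigr)$ times a polynomial factor in $\Delta^2/n'$. With $\Delta^2/n'\approx K_0^2\log(2m/n')$, the total entropy $\sum_j H(b_j)$ is at most $m\cdot O\bigl((n'/2m)^{cK_0^2/4}\bigr)$, which is at most $n'/5$ for $K_0$ large.

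By subadditivity of entropy, some fibre of $x\mapsto(b_j(x))_j$ contains at least $2^{4n'/5}$ points. Kleitman's isoperimetric (diameter) theorem then gives two points $x,x'$ in that fibre at Hamming distance at least $c_0n'$. Their difference $\chi=x-x'$ is the required partial colouring, since equal rounding buckets force $|\sum_{A_j}\chi_i|\le2\Delta$.

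\textbf{Step 2 (iteration).} Apply Step 1 to the uncoloured set, restricting every $A_j$ to it. This gives ground sizes $n_k\le(1-c_0)^kn$ and total discrepancy at most
\[
  \sum_k K_0\sqrt{n_k\log(2m/n_k)}.
\]
Writing $n_k=\theta^kn$, the $k$-th term is $K_0\sqrt{\theta^kn(\log(2m/n)+k\log(1/\theta))}$, and the series sums to $O\bigl(\sqrt{n\log(2m/n)}\bigr)$ because $\sqrt k\,\theta^{k/2}$ is summable. Once the ground set is empty the process stops, and every coordinate has been assigned $\pm1$.

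\textbf{Main obstacle.} I expect the main difficulty to be the entropy bookkeeping in Step 1: making the bucket width depend correctly on $\log(2m/n')$ and handling the uniform bound $|A_j|\le n'$. I would take the weighting from Spencer's original ``six standard deviations'' proof. As a fallback, the Lovett--Meka Gaussian random walk gives the same partial colouring lemma constructively, with the same iteration afterwards.
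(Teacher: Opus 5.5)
Your proof is sound, but it does something the paper does not attempt. The paper treats this statement as a black box quoted from Spencer and from Matou\v{s}ek's Theorem~4.2. Its only added step is the remark that $m<n$ reduces to $m=n$ by adjoining empty sets, which is exactly your first reduction. You instead reconstruct the classical entropy proof behind that citation: Hoeffding bounds on rounded buckets give $\sum_j H(b_j)\le n'/5$, hence a fibre of size $2^{4n'/5}$; Kleitman's diameter theorem then extracts a partial colouring with $\Omega(n')$ nonzero entries; and a geometric iteration finishes. The parameter choices you sketch do close.

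Two bookkeeping points deserve a line each. In Step~2 you replace $n_k\le\theta^kn$ by equality, but $t\mapsto t\log(2m/t)$ increases only for $t\le 2m/e$, and $n\le m$ does not keep you in that range. The fix is that $t\log(2m/t)\le(\log 2)^{-1}\,t'\log(2m/t')$ whenever $t\le t'\le m$, which costs only a constant factor. In Step~1, Kleitman gives distance at least $2\lfloor\alpha n'\rfloor+1$ once the fibre exceeds $B_{n'}(\lfloor\alpha n'\rfloor)$; taking $\alpha=1/5$ makes the binary entropy of $\alpha$ less than $4/5$. For small $n'$ you should combine this with the fact that the fibre has two distinct points, giving distance at least $\max\{1,2\alpha n'-1\}\ge\alpha n'$.

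As for what each route buys: the citation keeps the paper short and suffices, because Proposition~\ref{prop:discrepancy-barrier} needs only the existence of some absolute constant $K$. Your self-contained argument would make $K$, and hence the exponent $\gamma=1/(18K^2)$ in that proposition, explicitly computable from $c_0$ and $K_0$.
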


This standard form follows from Spencer's set-system discrepancy theorem;
see Spencer~\cite{Spencer1985} and Matou\v{s}ek~\cite[Theorem~4.2]{Matousek1999}.
For $m<n$, adjoining empty sets reduces the statement to the case $m=n$.

\begin{proposition}[Near-critical code-size lower bound]\label{prop:discrepancy-barrier}
There is an absolute constant $\gamma>0$ such that the following holds.  If
$b_p=1/\sqrt2+o(1)$ and a code
$\mathcal C_p\subseteq\{0,1\}^p$ has covering radius at most
\[
  \left\lfloor\frac p2-b_p\sqrt{p\log p}\right\rfloor,
\]
then, for all sufficiently large $p$,
\[
  |\mathcal C_p|\ge p^{1+\gamma}.
\]
\end{proposition}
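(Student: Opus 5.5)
The plan is to argue by contradiction, using Theorem~\ref{thm:spencer} to produce a word $y$ that lies far from every codeword. Suppose $|\mathcal C_p|=m<p^{1+\gamma}$, where $\gamma>0$ is an absolute constant fixed at the end in terms of the constant $K$ of Theorem~\ref{thm:spencer}. Pass to $\pm1$ vectors: for $x\in\{0,1\}^p$ write $s(x)_i=2x_i-1$. A direct computation gives
\[
  \distH(x,y)=\frac p2-\frac12\sum_{i=1}^p s(x)_i\,s(y)_i .
\]
So it suffices to find a sign vector $t=s(y)$ with
\[
  \sum_i s(x)_i t_i<2b_p\sqrt{p\log p}
  \qquad\text{for every }x\in\mathcal C_p.
\]
For such $y$, every codeword satisfies $\distH(x,y)>p/2-b_p\sqrt{p\log p}$, which is at least the assumed bound on the covering radius. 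This contradicts the fact that some codeword is within distance $\rho(\mathcal C_p)$ of $y$.

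To bring the inner products into set-system form, let $A_x=\{i:x_i=1\}$. Then
\[
  \sum_i s(x)_i t_i=2\sum_{i\in A_x}t_i-\sum_{i\in[p]}t_i .
\]
Apply Theorem~\ref{thm:spencer} with $n=p$ to the family $\{A_x:x\in\mathcal C_p\}\cup\{[p]\}$, which has at most $m+1$ sets. This gives signs $t$ with every partial sum bounded by $K\sqrt{p\log((m+1)/p+2)}$. Hence
\[
  \Bigl|\sum_i s(x)_i t_i\Bigr|\le 3K\sqrt{p\log\bigl((m+1)/p+2\bigr)}
\]
for every codeword $x$.

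It remains to compare constants. With $m<p^{1+\gamma}$ we have $\log((m+1)/p+2)\le\gamma\log p+O(1)$, so the bound is at most
\[
  3K\sqrt{\gamma\,p\log p+O(p)}=\bigl(3K\sqrt\gamma+o(1)\bigr)\sqrt{p\log p}.
\]
Since $2b_p\to\sqrt2$, choosing $\gamma=1/(9K^2)$ makes $3K\sqrt\gamma=1<\sqrt2$. The required strict inequality then holds for all sufficiently large $p$, which yields the contradiction and hence $|\mathcal C_p|\ge p^{1+\gamma}$.

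The only delicate point is the bookkeeping that turns Spencer's additive discrepancy into Hamming distances. This means handling the offset term $\sum_{i\in[p]}t_i$ by including $[p]$ in the family, and checking that the floor in the radius only helps. Beyond that, everything reduces to the constant comparison $3K\sqrt\gamma<\sqrt2$, which is what forces $\gamma$ to be small but absolute.
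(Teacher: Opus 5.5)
Your proof is correct and is essentially the paper's argument. Both apply Spencer's theorem to the codeword supports together with $[p]$, use $\sum_i s(x)_i t_i=2\sum_{i\in A_x}t_i-\sum_i t_i$ to get the uniform $3K\sqrt{p\log((m+1)/p+2)}$ correlation bound, and compare it with the $(\sqrt2-o(1))\sqrt{p\log p}$ correlation that the covering property forces. The differences are cosmetic: you argue by contradiction rather than solving for $L$, you use the supports $\{x_i=1\}$ instead of $\{x_i=0\}$, and you obtain the slightly better constant $\gamma=1/(9K^2)$ instead of the paper's $1/(18K^2)$.
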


\begin{proof}
Let $M=|\mathcal C_p|$ and, for each $x\in\mathcal C_p$, put
\[
  S_x=\{i\in[p]:(-1)^{x_i}=1\}.
\]
Apply Theorem~\ref{thm:spencer} to the $M+1$ sets
\[
  \{S_x:x\in\mathcal C_p\}\cup\{[p]\}.
\]
There is $\varepsilon\in\{\pm1\}^p$ such that, with
\[
  L=\log\left(\frac{M+1}{p}+2\right),
\]
one has
\[
  \left|\sum_{i\in S_x}\varepsilon_i\right|
  \le K\sqrt{pL}
  \quad\text{for every }x\in\mathcal C_p,
  \qquad
  \left|\sum_{i=1}^p\varepsilon_i\right|
  \le K\sqrt{pL}.
\]
Let $\sigma(x)_i=(-1)^{x_i}$.  Since
\[
  \langle\sigma(x),\varepsilon\rangle
  =2\sum_{i\in S_x}\varepsilon_i-
    \sum_{i=1}^p\varepsilon_i,
\]
it follows that
\[
  |\langle\sigma(x),\varepsilon\rangle|
  \le3K\sqrt{pL}
  \qquad\text{for every }x\in\mathcal C_p.
\]
Choose $y\in\{0,1\}^p$ with $\sigma(y)=\varepsilon$.  By the covering
property, some $x\in\mathcal C_p$ satisfies
\[
  \distH(x,y)\le
  \frac p2-b_p\sqrt{p\log p}.
\]
For this $x$,
\[
  \langle\sigma(x),\varepsilon\rangle
  =p-2\distH(x,y)
  \ge(\sqrt2-o(1))\sqrt{p\log p}.
\]
Comparing the last two displays gives
\[
  L\ge\left(\frac{2}{9K^2}-o(1)\right)\log p.
\]
For all sufficiently large $p$,
\[
  L\ge\frac{1}{9K^2}\log p.
\]
It follows that
\[
  M+1\ge p\left(p^{1/(9K^2)}-2\right).
\]
For all sufficiently large $p$,
\[
  p^{1/(9K^2)}-2\ge2p^{1/(18K^2)}.
\]
Taking $\gamma=1/(18K^2)$ gives $M\ge p^{1+\gamma}$ for all sufficiently
large $p$.
\end{proof}

Combining Propositions~\ref{prop:conditional-transfer} and
\ref{prop:discrepancy-barrier}, every near-critical implementation satisfying
the hypotheses of Proposition~\ref{prop:conditional-transfer} has
$\beta\ge1+\gamma$.  The lower-bound exponent furnished by
Proposition~\ref{prop:conditional-transfer} is at most
$1/(4+2\gamma)<1/4$.

\begin{question}
Is
\[
  F(N)=N^{1/2-o(1)}?
\]
More strongly, is $F(N)=\Theta(\sqrt N)$?
\end{question}

\end{document}